\newcommand{\commentSabine}[1]{}
\newcommand{\FDE}{\textsf{FDE}}
\newcommand{\BD}{\textsf{BD}}
\newcommand{\coimplies}{\Yleft}
\newcommand{\Gsquare}{\mathsf{G}^2}
\newcommand{\KGsquare}{\mathbf{K}\mathsf{G}^2}
\newcommand{\KG}{\mathfrak{GK}}
\newcommand{\KbiG}{\mathbf{K}\mathsf{biG}}
\newcommand{\fbKGsquare}{\mathbf{K}\mathsf{G}^2_{\mathsf{fb}}}
\newcommand{\fbKbiG}{\mathbf{K}\mathsf{biG}_{\mathsf{fb}}}
\newcommand{\bimodalLsquare}{\mathsf{bi}\mathcal{L}^\neg_{\Box,\lozenge}}
\newcommand{\bimodalL}{\mathsf{bi}\mathcal{L}_{\Box,\lozenge}}
\newtheorem{convention}{Convention}
\begin{document}

\setlength{\jot}{0pt} 
\setlength{\abovedisplayskip}{2pt}
\setlength{\belowdisplayskip}{2pt}
\setlength{\abovedisplayshortskip}{1pt}
\setlength{\belowdisplayshortskip}{1pt}

\title{Paraconsistent G\"{o}del modal logic\thanks{The research of Marta B\'ilkov\'a was supported by the grant 22-01137S of the Czech Science Foundation. The research of Sabine Frittella and Daniil Kozhemiachenko was funded by the grant ANR JCJC 2019, project PRELAP (ANR-19-CE48-0006). This research is part of the MOSAIC project financed by the European Union's Marie Sk\l{}odowska-Curie grant No.~101007627.}}
%
\author{Marta B\'ilkov\'a\inst{1}\orcidID{0000-0002-3490-2083} \and Sabine Frittella\inst{2}\orcidID{0000-0003-4736-8614}\and Daniil Kozhemiachenko\inst{2}\orcidID{0000-0002-1533-8034}}
\authorrunning{B\'ilkov\'a et al.}
\institute{The Czech Academy of Sciences, Institute of Computer Science, Prague\\
\email{bilkova@cs.cas.cz}
\and
INSA Centre Val de Loire, Univ.\ Orl\'{e}ans, LIFO EA 4022, France\\
\email{sabine.frittella@insa-cvl.fr, daniil.kozhemiachenko@insa-cvl.fr}}
\maketitle              
\begin{abstract}
We introduce a~paraconsistent modal logic $\KGsquare$, based on G\"{o}del logic with coimplication (bi-G\"{o}del logic) expanded with a De Morgan negation $\neg$. We use the logic to formalise reasoning with graded, incomplete and inconsistent information. Semantics of $\KGsquare$ is two-di\-men\-si\-o\-nal: we interpret $\KGsquare$ on crisp frames with two valuations $v_1$ and $v_2$, connected via $\neg$, that assign to each formula two values from the real-valued interval $[0,1]$. The first (resp., second) valuation encodes the positive (resp., negative) information the state gives to a~statement. We obtain that $\KGsquare$ is strictly more expressive than the classical modal logic $\mathbf{K}$ by proving that finitely branching frames are definable and by establishing a faithful embedding of $\mathbf{K}$ into $\KGsquare$. We also construct a~constraint tableau calculus for $\KGsquare$ over finitely branching frames, establish its decidability and provide a~complexity evaluation.
\keywords{Constraint tableaux \and G\"{o}del logic \and Two-dimensional logics \and Modal logics.}
\end{abstract}
\section{Introduction}\label{sec:introduction}
People believe in many things. Sometimes, they even have contradictory beliefs. Sometimes, they believe in one statement more than in the other. However, if a person has contradictory beliefs, they are not bound to believe in anything. Likewise, believing in $\phi$ \emph{strictly more than} in $\chi$ makes one believe in $\phi$ \emph{completely}. These properties of beliefs are natural, and yet hardly expressible in the classical modal logic. In this paper, we present a two-dimensional modal logic based on G\"{o}del logic that can formalise beliefs taking these traits into account.

\vspace{.8em}

\noindent
\textbf{\textit{Two-dimensional treatment of uncertainty.}}
Belnap-Dunn four-valued logic ($\BD$, or First Degree Entailment --- $\FDE$)~\cite{Belnap19,Dunn1976,OmoriWansing2017} can be used to formalise reasoning with both incomplete and inconsistent information. In $\BD$, formulas are evaluated on the De Morgan algebra $\mathbf{4}$ (fig.~\ref{fig:square:with:filter}, left) where the four values $\{t,f,b,n\}$ encode the information available about the formula: true, false, both true and false, neither true nor false. $b$ and $n$ thus represent inconsistent and incomplete information, respectively. It is important to note that the values represent the available information about the statement, not its intrinsic truth or falsity. Furthermore, this approach essentially treats \emph{evidence for} a statement (its positive support) as being independent of \emph{evidence against} it (negative support) which allows to differentiate between ‘absence of evidence’ and the ‘evidence of absence’. The $\BD$ negation $\neg$ then swaps positive and negative supports.
\begin{figure}
\centering
\resizebox{0.26\textwidth}{!}{
\begin{tikzpicture}[>=stealth,relative]
\node (U1) at (0,-1) { $f$ };
\node (U2) at (-1,0) { $n$ };
\node (U3) at (1,0) { $b$ };
\node (U4) at (0,1) { $t$ };
\path[-,draw] (U1) to (U2);
\path[-,draw] (U1) to (U3);
\path[-,draw] (U2) to (U4);
\path[-,draw] (U3) to (U4);
\end{tikzpicture}
}
\quad
\resizebox{0.28\textwidth}{!}{
\begin{tikzpicture}[>=stealth,relative]
\node (U1) at (0,-2) {$(0,1)$};
\node (U2) at (-2,0) {$(0,0)$};
\node (U3) at (2,0) {$(1,1)$};
\node (U4) at (0,2) {$(1,0)$};
\path[-,draw] (U1) to (U2);
\path[-,draw] (U1) to (U3);
\path[-,draw] (U2) to (U4);
\path[-,draw] (U3) to (U4);
\end{tikzpicture}
}
\caption{$\mathbf{4}$ (left) and its continuous  extension $[0,1]^\Join$ (right). $(x,y)\leq_{[0,1]^\Join}(x',y')$ iff $x\leq x'$ and $y\geq y'$.}
\label{fig:square:with:filter}
\end{figure}

The information regarding a statement, however, might itself be not crisp --- after all, our sources are not always completely reliable. Thus, 
to capture the uncertainty, we 
extend $\mathbf{4}$ to the lattice $[0,1]^\Join$ (fig.~\ref{fig:square:with:filter}, right). $[0,1]^\Join$ is a twist  product (cf,~\cite{Vakarelov1977} for definitions) of $[0,1]$ with itself: the order on the second coordinate is reversed w.r.t. the order on the first coordinate. This captures the intuition behind the usual ‘truth’ (upwards) order: an agent is more certain in $\chi$ than in $\phi$ when the evidence for $\chi$ is stronger than the evidence for $\phi$ while the evidence against $\chi$ is weaker than the evidence against $\phi$.

Note that $[0,1]^\Join$ is a bilattice whose left-to-right order can be interpreted as the information order. This links the logics we consider to bilattice logics applied to reasoning in AI in~\cite{Ginsberg1988} and then studied further in~\cite{Riveccio2010PhD,JansanaRamon2012}.


\vspace{.8em}

\noindent
\textbf{\textit{Comparing beliefs.}}
Uncertainty is manifested not only in the non-crisp character of the information. An agent might often lack the capacity to establish the concrete numerical value that represents their certainty in a given statement. Indeed, ‘I am 43\% certain that the wallet is Paula's’ does not sound natural. On the other hand, it is reasonable to assume that the agents' beliefs can be compared in most contexts: neither ‘I~am more confident that the wallet is Paula's than that the wallet is Quentin's’, nor ‘Alice is more certain than Britney that Claire loves pistachio ice cream’ require us to give a concrete numerical representation to the (un)certainty.

These considerations lead us to choosing the two-dimensional relative of the G\"{o}del logic dubbed $\Gsquare$ as the propositional fragment of our logic. $\Gsquare$ was introduced in~\cite{BilkovaFrittellaKozhemiachenko2021} and is, in fact, an extension of Moisil's logic\footnote{This logic was introduced several times: by Wansing~\cite{Wansing2008} as $\mathsf{I}_4\mathsf{C}_4$ and then by Leitgeb~\cite{Leitgeb2019} as HYPE. Cf.~\cite{OdintsovWansing2021} for a recent and more detailed discussion.} from~\cite{Moisil1942} with the prelinearity axiom $(p\rightarrow q)\vee(q\rightarrow p)$. As in the original G\"{o}del logic $\mathsf{G}$, the validity of a formula in $\Gsquare$ depends not on the values of its constituent variables but on the relative order between them. In this sense, $\mathsf{G}$ is a logic of comparative truth. Thus, as we treat positive and negative supports of a given statement independently, $\Gsquare$ is a logic of comparative truth and falsity. Note that while the values of two statements may not be comparable (say, $p$ is evaluated as $(0.5,0.3)$ and $q$ as $(0,0)$), the coordinates of the values always are. We will see in section~\ref{sec:language}, how we can formalise statements comparing agents' beliefs.

The sources available to the agents as well as the references between these sources can be represented as states in a Kripke model and its accessibility relation, respectively. It is important to mention that we account for the possibility that a~source can give us contradictory information regarding some statement. Still, we want our reasoning with such information to be non-trivial. This is reflected by the fact that $(p\wedge\neg p)\rightarrow q$ is not valid in $\Gsquare$. Thus, the logic (treated as a set of valid formulas) lacks the explosion principle. In this sense, we call $\Gsquare$ and its modal expansions ‘paraconsistent’. This links our approach to other paraconsistent fuzzy logics such as the ones discussed in~\cite{ErtolaEstevaFlaminioGodoNoguera2015}.

To reason with the information provided by the sources, we introduce two interdefinable modalities --- $\Box$ and $\lozenge$ --- interpreted as infima and suprema w.r.t. the upwards order on $[0,1]^\Join$. We mostly assume (unless stated otherwise) that accessibility relations in models are crisp. Intuitively, it means that the sources are either accessible or not (and, likewise, either refer to the other ones, or not).


\vspace{.8em}

\noindent
\textbf{\textit{Broader context.}}
This paper is a part of the project introduced in~\cite{BilkovaFrittellaMajerNazari2020} and carried on in~\cite{BilkovaFrittellaKozhemiachenko2021} aiming to de\-ve\-lop a modular logical framework for reasoning based on uncertain, incomplete and inconsistent information. We model agents who build their epistemic attitudes (like beliefs) based on information aggregated from multiple sources. $\Box$ and $\lozenge$ can be then viewed as two simple aggregation strategies: a pessimistic one (the infimum of positive support and the supremum of the negative support), and an optimistic one (the dual strategy), respectively. They can be defined via one another using $\neg$ in the expected manner: $\Box\phi$ stands for $\neg\lozenge\neg\phi$ and $\lozenge\phi$ for $\neg\Box\neg\phi$. In this paper, in contrast to~\cite{CintulaNoguera2014} and~\cite{BilkovaFrittellaMajerNazari2020}, we do allow for modalities to nest.

The other part of our motivation comes from the work on modal G\"{o}del logic ($\KG$ --- in the notation of~\cite{RodriguezVidal2021}) equipped with relational semantics~\cite{CaicedoRodriguez2010,CaicedoRodriguez2015,RodriguezVidal2021}. There, the authors develop proof and model theory of modal expansions of $\mathsf{G}$ interpreted over frames with both crisp and fuzzy accessibility relations. In particular, it was shown that the $\Box$-fragment\footnote{Note that $\Box$ and $\lozenge$ are not interdefinable in $\KG$ --- cf.~\cite[Lemma~6.1]{RodriguezVidal2021} for details.} of $\KG$ lacks the finite model property (FMP) w.r.t.\ fuzzy frames while the $\lozenge$-fragment has FMP\footnote{There is, however, a~semantics in~\cite{CaicedoMetcalfeRodriguezRogger2013} w.r.t.\ which bi-modal $\KG$ has FMP.} only w.r.t. fuzzy (but not crisp) frames. Furthermore, both $\Box$ and $\lozenge$ fragments of~$\KG$ are \textsf{PSPACE}-complete~\cite{MetcalfeOlivetti2009,MetcalfeOlivetti2011}.

Description  G\"{o}del logics, a notational version of modal logics, have found their use the field of knowledge representation~\cite{BobilloDelgadoGomez-RamiroStraccia2009,BobilloDelgadoGomez-RamiroStraccia2012,BorgwardtDistelPenaloza2014}, in particular, in the representation of vague or uncertain data which is not possible in the classical ontologies. In this respect, our paper provides a further extension of representable data types as we model not only vague reasoning but also non-trivial reasoning with inconsistent information.

In the present paper, we are expanding the language with the G\"{o}del coimplication $\coimplies$ to allow for the formalisation of statements expressing that an agent is \emph{strictly more confident} in one statement than in another one (cf.~section~\ref{sec:language} for the details). Furthermore, the presence of $\neg$ will allow us to simplify the frame definability. Still, we will show that our logic is a conservative extension of $\KG^c$ --- the modal G\"{o}del logic of crisp frames from~\cite{RodriguezVidal2021} in the language with both $\Box$ and $\lozenge$.

\vspace{.8em}

\noindent
\textbf{\textit{Logics.}}
We are discussing many logics obtained from the propositional G\"{o}del logic $\mathsf{G}$. Our main interest is in the logic we denote $\KGsquare$. It can be produced from $\mathsf{G}$ in several ways: (1) adding De Morgan negation $\neg$ to obtain $\Gsquare$ (in which case $\phi\coimplies\phi'$ can be defined as $\neg(\neg\phi'\rightarrow\neg\phi)$) and then further expanding the language with $\Box$ or $\lozenge$; (2) adding $\coimplies$ or $\Delta$ (Baaz' delta) to $\mathsf{G}$, then both $\Box$ and $\lozenge$ thus acquiring $\KbiG$\footnote{To the best of our knowledge, the only work on bi-G\"{o}del (symmetric G\"{o}del) modal logic is~\cite{GrigoliaKiseliovaOdisharia2016}. There, the authors propose an expansion of $\mathsf{biG}$ with $\Box$ and $\lozenge$ equipped with proof-theoretic interpretation and provide its algebraic semantics.} (modal bi-G\"{o}del logic) which is further enriched with $\neg$. These and other relations are given on fig.~\ref{fig:logics}.
\begin{figure}
\centering
\[\xymatrix{
{\KbiG}^\mathsf{f}&&\KGsquare\\
\KbiG\ar[urr]|{\neg}\ar[u]^{\mathsf{ff}}&\KG\ar[ul]|{\coimplies/\Delta}&\\
\mathsf{biG}\ar[u]|{\Box,\lozenge}&\KG^c\ar[uur]|{\neg}\ar[u]^{\mathsf{ff}}\ar[ul]|{\coimplies/\Delta}&\Gsquare\ar[uu]|{\Box/\lozenge}\\
&\mathsf{G}\ar[ul]|{\coimplies/\Delta}\ar[u]|{\Box,\lozenge}\ar[ur]|{\neg}
}\]
\caption{Logics in the article. $\mathsf{ff}$ stands for ‘permitting fuzzy frames’. Subscripts on arrows denote language expansions. $/$ stands for ‘or’ and comma for ‘and’.}
\label{fig:logics}
\end{figure}

\noindent
\textbf{\textit{Plan of the paper.}}
The remainder of the paper is structured as follows. In section~\ref{sec:language}, we define bi-G\"{o}del algebras and use them to present $\KbiG$ (on both fuzzy and crisp frames) and then $\KGsquare$ (on crisp frames), show how to formalise statements where beliefs of agents are compared, and prove some semantical properties. In section~\ref{sec:modeltheory}, we show that $\lozenge$ fragment of $\KbiG^\mathsf{f}$ ($\KbiG$ on fuzzy frames) lacks finite model property. We then present a finitely branching fragment of $\KGsquare$ ($\fbKGsquare$) and argue for its use in representation of agents' beliefs. In section~\ref{sec:tableaux}, we design a~constraint tableaux calculus for $\fbKGsquare$ which we use to obtain the complexity results. Finally, in section~\ref{sec:conclusion} we discuss further lines of research.
\section{Language and semantics}\label{sec:language}
In this section, we present semantics for $\KbiG$ (modal bi-G\"{o}del logic) over both fuzzy and crisp frames and the one for $\KGsquare$ over crisp frames. 
Let $\mathsf{Var}$ be a~countable set of propositional variables. The language $\bimodalLsquare$ is defined via the following grammar.
\begin{align*}
\phi&\coloneqq p\in\mathsf{Var}\mid\neg\phi\mid(\phi\wedge\phi)\mid(\phi\vee\phi)\mid(\phi\rightarrow\phi)\mid(\phi\coimplies\phi)\mid\Box\phi\mid\lozenge\phi
\end{align*}
Two constants, $\mathbf{0}$ and $\mathbf{1}$, can be introduced in the traditional fashion: $\mathbf{0}\coloneqq p\coimplies p$, $\mathbf{1}\coloneqq p\rightarrow p$. Likewise, the G\"{o}del negation can be also defined as expected: ${\sim}\phi\coloneqq\phi\rightarrow\mathbf{0}$. The $\neg$-less fragment of $\bimodalLsquare$ is denoted with $\bimodalL$.

To facilitate the presentation, we introduce bi-G\"{o}del algebras.
\begin{definition}
The bi-G\"{o}del algebra $[0,1]_{\mathsf{G}}=([0,1],0,1,\wedge_\mathsf{G},\vee_\mathsf{G},\rightarrow_{\mathsf{G}},\coimplies_\mathsf{G})$ is defined as follows: for all $a,b\in[0,1]$, the standard operations are given by $a\wedge_\mathsf{G}b\coloneqq\min(a,b)$, $a\vee_\mathsf{G}b\coloneqq\max(a,b)$,
\begin{equation*}
a\rightarrow_{G} b=
\begin{cases}
1, \ \text{if}\  a\leq b\\
b \ \ \text{else,}
\end{cases}
\ \ \ \qquad
b\Yleft_{G} a=
\begin{cases}
0, \ \text{if}\  b\leq a\\
b \ \ \text{else.}
\end{cases}
\end{equation*}
\end{definition}
\begin{definition}\label{def:frames}
\begin{itemize}
\item[]
\item A \emph{fuzzy frame} is a tuple $\mathfrak{F}=\langle W,R\rangle$ with $W\neq\varnothing$ and $R:W\times W\rightarrow[0,1]$.
\item A \emph{crisp frame} is a tuple $\mathfrak{F}=\langle W,R\rangle$ with $W\neq\varnothing$ and $R\subseteq W\times W$.
\end{itemize}
\end{definition}
\begin{definition}[$\KbiG$ models]\label{def:KbiGsemantics}
A \emph{$\KbiG$ model} is a tuple $\mathfrak{M}=\langle W,R,v\rangle$ with $\langle W,R\rangle$ being a (crisp or fuzzy) frame, and $v:\mathsf{Var}\times W\rightarrow[0,1]$. $v$ (a valuation) is extended on complex $\bimodalL$ formulas as follows:
\begin{align*}
v(\phi\circ\phi',w)&=v(\phi,w)\circ_\mathsf{G}v(\phi',w).\tag{$\circ\in\{\wedge,\vee,\rightarrow,\coimplies\}$}
\end{align*}
The interpretation of modal formulas on \emph{fuzzy} frames is as follows:
\begin{align*}
v(\Box\phi,w)&=\inf\limits_{w'\in W}\{wRw'\rightarrow_\mathsf{G}v(\phi,w')\},&v(\lozenge\phi,w)&=\sup\limits_{w'\in W}\{wRw'\wedge_\mathsf{G}v(\phi,w')\}.
\end{align*}
On \emph{crisp} frames, the interpretation is simpler (here, $\inf(\varnothing)\!=\!1$ and $\sup(\varnothing)\!=\!0$):
\begin{align*}
v(\Box\phi,w)&=\inf\{v(\phi,w'):wRw'\},&v(\lozenge\phi,w)&=\sup\{v(\phi,w'):wRw'\}.
\end{align*}
We say that $\phi\in\bimodalL$ is \emph{$\KbiG$ valid on frame $\mathfrak{F}$} (denote, $\mathfrak{F}\models_{\KbiG}\phi$) iff for any $w\in\mathfrak{F}$, it holds that $v(\phi,w)=1$ for any model $\mathfrak{M}$ on $\mathfrak{F}$.
\end{definition}
Note that the definitions of validity in $\KG^c$ and $\KG$ coincide with those in $\KbiG$ and $\KbiG^\mathsf{f}$ if we consider the $\coimplies$-free fragment of $\bimodalL$.

As we have already mentioned, on \emph{crisp} frames, the accessibility relation can be understood as availability of (trusted or reliable) sources. In \emph{fuzzy} frames, it can be thought of as the degree of trust one has in a source. Then, $\lozenge\phi$ represents the search for evidence from trusted sources that supports $\phi$: $v(\lozenge\phi,t)>0$ iff there is $t'$ s.t.\ $tRt'>0$ and $v(\phi,t')>0$, i.e., there must be a source $t'$ to which $t$ has positive degree of trust and that has at least some certainty in $\phi$. On the other hand, if no source is trusted by $t$ (i.e., $tRu=0$ for all $u$), then $v(\lozenge\phi,t)=0$. Likewise, $\Box\chi$ can be construed as the search of evidence against $\chi$ given by trusted sources: $v(\Box\chi,t)<1$ iff there is a~source $t'$ that gives to $\chi$ less certainty than $t$ gives trust to $t'$. In other words, if $t$ trusts no sources, or if all sources have at least as high confidence in $\chi$ as $t$ has in them, then $t$ fails to find a trustworthy enough counterexample.
\begin{definition}[$\KGsquare$ models]\label{def:KG2semantics}
A \emph{$\KGsquare$ model} is a tuple $\mathfrak{M}=\langle W,R,v_1,v_2\rangle$ with $\langle W,R\rangle$ being a \emph{crisp} frame, and $v_1,v_2:\mathsf{Var}\times W\rightarrow[0,1]$. The valuations which we interpret as support of truth and support of falsity, respectively, are extended on complex formulas as expected.
\begin{longtable}{rclrcl}
$v_1(\neg\phi,w)$&$=$&$v_2(\phi,w)$&$v_2(\neg\phi,w)$&$=$&$v_1(\phi,w)$\\
$v_1(\phi\wedge\phi',w)$&$=$&$v_1(\phi,w)\wedge_\mathsf{G}v_1(\phi',w)$&$v_2(\phi\wedge\phi',w)$&$=$&$v_2(\phi,w)\vee_\mathsf{G}v_2(\phi',w)$\\
$v_1(\phi\vee\phi',w)$&$=$&$v_1(\phi,w)\vee_\mathsf{G}v_1(\phi',w)$&$v_2(\phi\vee\phi',w)$&$=$&$v_2(\phi,w)\wedge_\mathsf{G}v_2(\phi',w)$\\
$v_1(\phi\rightarrow\phi',w)$&$=$&$v_1(\phi,w)\!\rightarrow_\mathsf{G}\!v_1(\phi',w)$&$v_2(\phi\rightarrow\phi',w)$&$=$&$v_2(\phi',w)\coimplies_\mathsf{G}v_2(\phi,w)$\\
$v_1(\phi\coimplies\phi',w)$&$=$&$v_1(\phi,w)\coimplies_\mathsf{G}v_1(\phi',w)$&$v_2(\phi\coimplies\phi',w)$&$=$&$v_2(\phi',w)\!\rightarrow_\mathsf{G}\!v_2(\phi,w)$\\
$v_1(\Box\phi,w)$&$=$&$\inf\{v_1(\phi,w'):wRw'\}$&$v_2(\Box\phi,w)$&$=$&$\sup\{v_2(\phi,w'):wRw'\}$\\
$v_1(\lozenge\phi,w)$&$=$&$\sup\{v_1(\phi,w'):wRw'\}$&$v_2(\lozenge\phi,w)$&$=$&$\inf\{v_2(\phi,w'):wRw'\}$
\end{longtable}
%
We say that $\phi\in\bimodalLsquare$ is \emph{$\KGsquare$ valid on frame $\mathfrak{F}$} ($\mathfrak{F}\models_{\KGsquare}\phi$) iff for any $w\in\mathfrak{F}$, it holds that $v_1(\phi,w)=1$ and $v_2(\phi,w)=0$ for any model $\mathfrak{M}$ on $\mathfrak{F}$.
\end{definition}
\begin{convention}
In what follows, we will denote a pair of valuations $\langle v_1,v_2\rangle$ just with $v$ if there is no risk of confusion. Furthermore, for each frame $\mathfrak{F}$ and each $w\in\mathfrak{F}$, we denote
\begin{align*}
R(w)&=\{w':wRw'=1\},\tag{for fuzzy frames}\\
R(w)&=\{w':wRw'\}.\tag{for crisp frames}
\end{align*}
\end{convention}
\begin{convention}
We will further denote with $\KbiG$ the set of all formulas $\KbiG$-valid on all \emph{crisp} frames; $\KbiG^\mathsf{f}$ the set of all formulas $\KbiG$-valid on all \emph{fuzzy} frames; and $\KGsquare$ --- the set of all formulas $\KGsquare$ valid on all \emph{crisp} frames.
\end{convention}
Before proceeding to establish some semantical properties, let us make two remarks. First,  neither $\Box$ nor $\lozenge$ are trivialised by contradictions: in contrast to $\mathbf{K}$, $\Box(p\wedge\neg p)\rightarrow\Box q$ is not $\KGsquare$ valid, and neither is $\lozenge(p\wedge\neg p)\rightarrow\lozenge q$. Intuitively, this means that one can have contradictory but non-trivial beliefs. Second, we can formalise statements of comparative belief such as the ones we have already given before:
\begin{quote}
\textsf{wallet}: \emph{I~am more confident that the wallet is Paula's than that the wallet is Quentin's.}\\
\textsf{ice cream}: \emph{Alice is more certain than Britney that Claire loves pistachio ice cream.}
\end{quote}
For this, consider the following defined operators.
\begin{align}
\Delta\tau&\coloneqq{\sim}(\mathbf{1}\coimplies\tau)\label{eq:+topdefinition}\\
\Delta^\neg\phi&\coloneqq{\sim}(\mathbf{1}\coimplies\phi)\wedge\neg{\sim\sim}(\mathbf{1}\coimplies\phi)\label{eq:topdefinition}
\end{align}
It is clear that for any $\tau\in\bimodalL$ and $\phi\in\bimodalLsquare$ interpreted on $\KbiG$ and $\KGsquare$ models, respectively, it holds that
\begin{align}
v(\Delta\tau,w)&=
\begin{cases}
1&\text{ if }v(\tau,w)=1\\
0&\text{ otherwise},
\end{cases}
&&
v(\Delta^\neg\phi,w)&=
\begin{cases}
(1,0)&\text{if }v(\phi,w)=(1,0)\\
(0,1)&\text{otherwise}.
\end{cases}
\end{align}
Now we can define formulas that express order relations between values of two formulas both for $\KbiG$ and $\KGsquare$.

For $\KbiG$ they look as follows:
\begin{align*}
v(\tau,w)\leq v(\tau',w)&\text{ iff } v(\Delta(\tau\rightarrow\tau'),w)=1,\\
v(\tau,w)>v(\tau',w)&\text{ iff } v\left({\sim}\Delta(\tau'\rightarrow\tau),w\right)=1.
\end{align*}
In $\KGsquare$, the orders are defined in a more complicated way:
\begin{align*}
v(\phi,w)\leq v(\phi',w)&\text{ iff } v(\Delta^\neg(\phi\rightarrow\phi'),w)=(1,0),\\
v(\phi,w)>v(\phi',w)&\text{ iff } v(\Delta^\neg(\phi'\rightarrow\phi)\wedge{\sim}\Delta^\neg(\phi\rightarrow\phi'),w)=(1,0).
\end{align*}
Observe, first, that both in $\KbiG$ and $\KGsquare$ the relation ‘the value of $\tau$ ($\phi$) is less or equal to the value of $\tau'$ ($\phi'$)’ is defined as ‘$\tau\rightarrow\tau'$ ($\phi\rightarrow\phi'$) has the designated value’. In $\KbiG$, the strict order is just a negation of the non-strict order since all values are comparable. On the other hand, in contrast to $\KbiG$, the strict order in $\KGsquare$ is not a simple negation of the non-strict order since $\KGsquare$ is essentially two-dimensional. We provide further details in remark~\ref{rem:comparativebelief}.

Finally, we can formalise $\mathsf{wallet}$ as follows. We interpret ‘I am confident’ as $\Box$ and substitute ‘the wallet is Paula's’ with $p$, and ‘the wallet is Quentin's’ with~$q$. Now, we just use the definition of $>$ in $\bimodalLsquare$ to get
\begin{align}
\Delta^\neg(\Box p\rightarrow\Box q)\wedge{\sim}\Delta^\neg(\Box q\rightarrow\Box p).
\label{eq:walletexample}
\end{align}
For \textsf{ice cream}, we need two different modalities: $\Box_a$ and $\Box_b$ for Alice and Brittney, repectively. Replacing ‘Alice loves pistachio ice cream’ with $p$, we get
\begin{align}
\Delta^\neg(\Box_a p\rightarrow\Box_b p)\wedge{\sim}\Delta^\neg(\Box_b p\rightarrow\Box_a p).
\label{eq:icecreamexample}
\end{align}
\begin{remark}
$\Delta$ is called Baaz' delta (cf., e.g.~\cite{Baaz1996} for more details). Intuitively, $\Delta\tau$ can be interpreted as ‘$\tau$ has the designated value’ and acts much like a necessity modality: if $\tau$ is $\KbiG$ valid, then so is $\Delta\tau$; moreover, $\Delta(p\rightarrow q)\rightarrow(\Delta p\rightarrow\Delta q)$ is valid. Furthermore, $\Delta$ and $\coimplies$ can be defined via one another in $\KbiG$, thus the addition of $\Delta$ to $\mathsf{G}$ makes it more expressive and allows to define both strict and non-strict orders.
\end{remark}
\begin{remark}\label{rem:comparativebelief}
Recall that we mentioned in section~\ref{sec:introduction} that an agent should usually be able to compare their beliefs in different statements: this is reflected by the fact that $\Delta(p\rightarrow q)\vee\Delta(q\rightarrow p)$
is $\KbiG$ valid. It can be counter-intuitive if the contents of beliefs have nothing in common, however.

This drawback is avoided if we treat support of truth and support of falsity independently. Here is where a difference between $\KbiG$ and $\KGsquare$ lies. In $\KGsquare$, we can \emph{only compare the values of formulas coordinate-wise}, whence
$\Delta^\neg(p\rightarrow q)\vee\Delta^\neg(q\rightarrow p)$ is not $\KGsquare$ valid. E.g., if we set $v(p,w)=(0.7,0.6)$ and $v(q,w)=(0.4,0.2)$, $v(p,w)$ and $v(q,w)$ will not be comparable w.r.t.\ the truth (upward) order on $[0,1]^\Join$.
\end{remark}
We end this section with establishing some useful semantical properties.
\begin{proposition}\label{prop:+isenough}
$\mathfrak{F}\models_{\KGsquare}\!\phi$ iff for any model $\mathfrak{M}$ on $\mathfrak{F}$ and any $w\!\in\!\mathfrak{F}$, $v_1(\phi,w)\!=\!1$.
\end{proposition}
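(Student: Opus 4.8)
I would prove the two directions separately. The left-to-right direction is immediate from Definition~\ref{def:KG2semantics}: if $\mathfrak{F}\models_{\KGsquare}\phi$ then, by the very definition of $\KGsquare$-validity, $v_1(\phi,w)=1$ (and $v_2(\phi,w)=0$) in every model on $\mathfrak{F}$ at every $w$. So the whole content lies in the converse, and for it it suffices to show: if $v_1(\phi,w)=1$ holds in all models on $\mathfrak{F}$ at all worlds, then $v_2(\phi,w)=0$ holds in all models on $\mathfrak{F}$ at all worlds as well.

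The plan is to build, from an arbitrary model, a companion model in which the second valuation is turned into an order-reversed copy of the first. Let $\iota\colon[0,1]\to[0,1]$, $\iota(x)=1-x$, be the standard order-reversing bijection; being an anti-automorphism of the complete lattice $[0,1]$, it satisfies $\iota(a\wedge_\mathsf{G}b)=\iota(a)\vee_\mathsf{G}\iota(b)$, $\iota(a\vee_\mathsf{G}b)=\iota(a)\wedge_\mathsf{G}\iota(b)$, $\iota(a\rightarrow_\mathsf{G}b)=\iota(b)\coimplies_\mathsf{G}\iota(a)$, $\iota(a\coimplies_\mathsf{G}b)=\iota(b)\rightarrow_\mathsf{G}\iota(a)$, $\iota(\inf X)=\sup\iota[X]$ and $\iota(\sup X)=\inf\iota[X]$, and it respects the conventions $\inf\varnothing=1$, $\sup\varnothing=0$ since $\iota(0)=1$ and $\iota(1)=0$. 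Given a $\KGsquare$ model $\mathfrak{M}=\langle W,R,v_1,v_2\rangle$ on $\mathfrak{F}$, I would define the model $\mathfrak{M}^\partial=\langle W,R,v_1^\partial,v_2^\partial\rangle$ on the same crisp frame by $v_1^\partial(p,w)\coloneqq\iota(v_2(p,w))$ and $v_2^\partial(p,w)\coloneqq\iota(v_1(p,w))$, and then prove, by simultaneous induction on $\phi\in\bimodalLsquare$, the claim $v_1^\partial(\phi,w)=\iota(v_2(\phi,w))$ and $v_2^\partial(\phi,w)=\iota(v_1(\phi,w))$ for all $w\in W$. The base case and the $\neg,\wedge,\vee$ cases are routine (the $v_2$-clauses in Definition~\ref{def:KG2semantics} and the De Morgan behaviour of $\iota$ fit together). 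The cases of $\rightarrow$ and $\coimplies$ are the delicate ones: the asymmetry of the two clauses (one built from $\rightarrow_\mathsf{G}$, the other from $\coimplies_\mathsf{G}$, with the arguments swapped) is matched precisely by $\iota(a\rightarrow_\mathsf{G}b)=\iota(b)\coimplies_\mathsf{G}\iota(a)$ and its mirror. For $\Box,\lozenge$ one uses that $\iota$ exchanges $\inf$ with $\sup$, so that e.g.\ $v_1^\partial(\Box\phi,w)=\inf\{\iota(v_2(\phi,w')):wRw'\}=\iota(\sup\{v_2(\phi,w'):wRw'\})=\iota(v_2(\Box\phi,w))$, and similarly for the remaining three modal subcases.

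Granting the claim, the proposition follows quickly: assume $v_1(\psi,w)=1$ in every model on $\mathfrak{F}$ at every world, and fix an arbitrary $\mathfrak{M}$ on $\mathfrak{F}$ and $w\in W$. Applying the assumption to $\mathfrak{M}^\partial$ gives $v_1^\partial(\phi,w)=1$, hence $\iota(v_2(\phi,w))=1$, hence $v_2(\phi,w)=\iota(1)=0$; together with $v_1(\phi,w)=1$ this gives $\mathfrak{F}\models_{\KGsquare}\phi$. The only genuine obstacle is getting the bookkeeping of the inductive step right for the implication/coimplication and the modal connectives — where the $v_1$/$v_2$ asymmetry and the $\inf$/$\sup$ swap under $\iota$ have to be lined up carefully; the rest is mechanical.
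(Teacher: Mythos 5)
Your proposal is correct and follows essentially the same route as the paper: the paper's own argument also reduces the claim to the auxiliary lemma that the ``swapped'' model with $v^*(p,w)=(1-v_2(p,w),\,1-v_1(p,w))$ satisfies $v^*(\phi,w)=(1-v_2(\phi,w),\,1-v_1(\phi,w))$ for all $\phi$, proved by induction (with the modal case handled via the $\inf$/$\sup$ exchange), and then derives $v_2(\phi,w)=0$ from $v_1=1$ in all models. The only cosmetic difference is that you conclude directly where the paper argues by contradiction.
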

\begin{proof}
The ‘if’ direction is evident from the definition of validity. We show the ‘only if’ part. It suffices to show that the following statement holds for any $\phi$ and $w\in\mathfrak{F}$:
\begin{quote}
\emph{for any $v(p,w)=(x,y)$, let $v^*(p,w)=(1-y,1-x)$. Then $v(\phi,w)=(x,y)$ iff $v^*(\phi,w)=(1-y,1-x)$.}
\end{quote}
We proceed by induction on $\phi$. The proof of propositional cases is identical to the one in~\cite[Proposition~5]{BilkovaFrittellaKozhemiachenko2021}. We consider only the case of $\phi=\Box\psi$ since $\Box$ and $\lozenge$ are interdefinable.

Let $v(\Box\psi,w)=(x,y)$. Then $\inf\{v_1(\psi,w'):wRw'\}=x$, and $\sup\{v_2(\psi,w'):wRw'\}=y$. Now, we apply the induction hypothesis to $\psi$, and thus if $v(\psi,s)=(x',y')$, then $v^*(\psi,s)=(1-y',1-x')$ for any $s\in R(w)$. But then $\inf\{v^*_1(\psi,w'):wRw'\}=1-y$, and $\sup\{v^*_2(\psi,w'):wRw'\}=1-x$ as required.

Now, assume that $v_1(\phi,w)=1$ for any $v_1$ and $w$. We can show that $v_2(\phi,w)\!=\!0$ for any $w$ and $v_2$. Assume for contradiction that $v_2(\phi,w)\!=\!y\!>\!0$ but $v_1(\phi,w)\!=\!1$. Then, $v^*(\phi)\!=\!(1\!-\!y,1\!-\!1)\!=\!(1\!-\!y,0)$. But since $y\!>\!0$, $v^*(\phi)\!\neq\!(1,0)$.
\end{proof}
\begin{proposition}\label{prop:conservativity}
\begin{enumerate}
\item[]
\item Let $\phi$ be a formula over $\{\mathbf{0},\wedge,\vee,\rightarrow,\Box,\lozenge\}$. Then, $\mathfrak{F}\models_{\KG}\phi$ iff $\mathfrak{F}\models_{\KbiG^\mathsf{f}}\phi$ and $\mathfrak{F}\models_{\KG^c}\phi$ iff $\mathfrak{F}\models_{\KbiG}\phi$, for any~$\mathfrak{F}$.
\item Let $\phi\in\bimodalL$. Then, $\mathfrak{F}\models_{\KbiG}\phi$ iff $\mathfrak{F}\models_{\KGsquare}\phi$, for any crisp~$\mathfrak{F}$.
\end{enumerate}
\end{proposition}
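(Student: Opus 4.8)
The plan is to prove both items by transferring models between the logics, exploiting the fact that the semantic clauses coincide on the connectives that actually occur in $\phi$; for Part~2 we additionally lean on Proposition~\ref{prop:+isenough}.

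For Part~1 I would observe that a $\KbiG^\mathsf{f}$ (resp.\ $\KbiG$) model and a $\KG$ (resp.\ $\KG^c$) model are literally the same object — a fuzzy (resp.\ crisp) frame together with a valuation $v\colon\mathsf{Var}\times W\to[0,1]$ — and that for $\phi$ built from $\{\mathbf{0},\wedge,\vee,\rightarrow,\Box,\lozenge\}$ the inductive clauses defining $v(\phi,w)$ are identical in Definition~\ref{def:KbiGsemantics} and in the G\"odel--modal semantics; the only clause in which the two semantics differ is the one for $\coimplies$, which never applies here (and $\mathbf{0}$, whether taken as a primitive constant or read as $p\coimplies p$, is interpreted by $0$ in every state). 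A trivial induction on $\phi$ then gives that $v(\phi,w)$ is the same number under both readings, hence $v(\phi,w)=1$ for all $v,w$ in one iff in the other; so the two notions of validity on a fixed $\mathfrak{F}$ coincide, for fuzzy frames and for crisp frames alike. This just spells out the remark made right after Definition~\ref{def:KbiGsemantics}.

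For Part~2, fix a crisp $\mathfrak{F}$ and $\phi\in\bimodalL$. The key point, proved by a straightforward induction on $\psi\in\bimodalL$, is that in any $\KGsquare$ model $\langle W,R,v_1,v_2\rangle$ the value $v_1(\psi,u)$ depends only on $v_1|_{\mathsf{Var}}$ and equals $v(\psi,u)$ as computed in the $\KbiG$ model $\langle W,R,v_1\rangle$ — indeed the first-coordinate clauses for $\wedge,\vee,\rightarrow,\coimplies,\Box,\lozenge$ in Definition~\ref{def:KG2semantics} are exactly the $\KbiG$ clauses. From this: if $\mathfrak{F}\not\models_{\KGsquare}\phi$, then by Proposition~\ref{prop:+isenough} some $\KGsquare$ model on $\mathfrak{F}$ has $v_1(\phi,w)\neq1$ at some $w$, and then $\langle W,R,v_1\rangle$ is a $\KbiG$ model refuting $\phi$ at $w$, so $\mathfrak{F}\not\models_{\KbiG}\phi$. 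Conversely, if $\mathfrak{F}\not\models_{\KbiG}\phi$, take a $\KbiG$ model $\langle W,R,v\rangle$ with $v(\phi,w)\neq1$, and extend it to the $\KGsquare$ model $\langle W,R,v,v_2\rangle$ with $v_2$ arbitrary (say $v_2\equiv0$); the key lemma yields $v_1(\phi,w)=v(\phi,w)\neq1$, which already violates the $\KGsquare$-validity condition, so $\mathfrak{F}\not\models_{\KGsquare}\phi$.

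I do not expect a genuine obstacle here; the one point deserving attention is the asymmetry in Part~2. The $v_2$-coordinate of a $\neg$-free formula is governed by the ``dual'', $\coimplies$-based clauses and is not itself a $\KbiG$ value, so a $\KGsquare$ counter-model cannot be ported to $\KbiG$ via its second coordinate. This is precisely why Proposition~\ref{prop:+isenough} — which guarantees that any $\KGsquare$-refutation is already witnessed on the first coordinate — is needed in the direction $\mathfrak{F}\not\models_{\KGsquare}\phi\Rightarrow\mathfrak{F}\not\models_{\KbiG}\phi$, whereas the other direction uses only the definition of $\KGsquare$-validity.
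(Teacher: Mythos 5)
Your proposal is correct and follows essentially the same route as the paper: Part~1 by observing that the semantic clauses coincide on the connectives occurring in $\phi$, and Part~2 by noting that the $v_1$-clauses of $\KGsquare$ agree with the $\KbiG$ clauses on $\bimodalL$ and invoking Proposition~\ref{prop:+isenough} for exactly the direction where the second coordinate would otherwise be an obstacle. The only difference is that you argue contrapositively and spell out the inductive ``key lemma'' that the paper leaves implicit.
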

\begin{proof}
1. follows directly from the semantic conditions of definition~\ref{def:KbiGsemantics}. We consider 2. The ‘only if’ direction is straightforward since the semantic conditions of $v_1$ in $\KGsquare$ models and $v$ in $\KbiG$ models coincide. The ‘if’ direction follows from proposition~\ref{prop:+isenough}: if $\phi$ is valid on $\mathfrak{F}$, then $v(\phi,w)=1$ for any $w\in\mathfrak{F}$ and any $v$ on $\mathfrak{F}$. But then, $v_1(\phi,w)=1$ for any $w\in\mathfrak{F}$. Hence, $\mathfrak{F}\models_{\KGsquare}\phi$.
\end{proof}
\section{Model-theoretic properties of $\KGsquare$}\label{sec:modeltheory}
In the previous section, we have seen how the addition of $\coimplies$ allowed us to formalise statements considering comparison of beliefs. Here, we will show that both $\Box$ and $\lozenge$ fragments of $\KbiG$, and hence $\KGsquare$, are strictly more expressive than the classical modal logic $\mathbf{K}$, i.e. that they can define all classically definable classes of crisp frames as well as some undefinable ones.
\begin{definition}[Frame definability]
Let $\Sigma$ be a set of formulas. $\Sigma$ \emph{defines} a~class of frames $\mathbb{K}$ in a logic $\mathbf{L}$ iff it holds that $\mathfrak{F}\in\mathbb{K}$ iff $\mathfrak{F}\models_\mathbf{L}\Sigma$.
\end{definition}
The next statement follows from proposition~\ref{prop:conservativity} since $\mathbf{K}$ can be faithfully embedded in $\KG^c$ by substituting each variable $p$ with ${\sim\sim}p$ (cf.~\cite{MetcalfeOlivetti2009,MetcalfeOlivetti2011} for details).
\begin{theorem}\label{theorem:definability}
Let $\mathbb{K}$ be a class of frames definable in $\mathbf{K}$. Then, $\mathbb{K}$ is definable in $\KbiG$ and $\KGsquare$.
\end{theorem}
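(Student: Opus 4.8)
The plan is to push the standard classical-to-Gödel embedding down to the level of frame definability, and then transport it through the conservativity bridges of Proposition~\ref{prop:conservativity}. Assume $\mathbb{K}$ is defined in $\mathbf{K}$ by a set $\Sigma$ of modal formulas, which we may take to be built from $\{\mathbf{0},\wedge,\vee,\rightarrow,\Box,\lozenge\}$ (reading classical $\neg\psi$ as the abbreviation $\psi\rightarrow\mathbf{0}$). For $\phi\in\Sigma$ let $\phi^*$ be the result of replacing every propositional variable $p$ by ${\sim\sim}p$; since ${\sim}$ is itself the abbreviation $(\cdot)\rightarrow\mathbf{0}$, each $\phi^*$ lies in the $\{\mathbf{0},\wedge,\vee,\rightarrow,\Box,\lozenge\}$-fragment of $\bimodalL$. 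I will show that $\Sigma^*:=\{\phi^*:\phi\in\Sigma\}$ defines $\mathbb{K}$ in $\KG^c$, and then transfer this to $\KbiG$ and $\KGsquare$.

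The heart is the equivalence, for every crisp frame $\mathfrak{F}$ and every classical modal $\phi$, that $\mathfrak{F}\models_{\mathbf{K}}\phi$ iff $\mathfrak{F}\models_{\KG^c}\phi^*$. The engine is the observation that ${\sim\sim}a\in\{0,1\}$ for every $a\in[0,1]$, with ${\sim\sim}a=1$ iff $a>0$, together with the fact that $\{0,1\}$ is closed under $\wedge_\mathsf{G},\vee_\mathsf{G},\rightarrow_\mathsf{G}$ — on which these agree with classical $\wedge,\vee,\rightarrow$ — and, over crisp frames, under $\inf$ and $\sup$, where on a set of $\{0,1\}$-values $\inf$ realises $\forall$ and $\sup$ realises $\exists$ (the conventions $\inf\varnothing=1$, $\sup\varnothing=0$ matching the classical vacuous cases for $\Box$ and $\lozenge$). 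Given a classical Kripke model $M=\langle\mathfrak{F},V\rangle$, the $\KG^c$ model with $v(p,w):=1$ if $M,w\models p$ and $v(p,w):=0$ otherwise satisfies $v({\sim\sim}p,w)=v(p,w)$, and a routine induction on $\phi$ gives $v(\phi^*,w)=1$ iff $M,w\models\phi$ for all $w$. Conversely, given a $\KG^c$ model $\mathfrak{M}=\langle\mathfrak{F},v\rangle$, putting $M,w\models p$ iff $v(p,w)>0$ yields $v({\sim\sim}p,w)=1$ iff $M,w\models p$, and the same induction gives $v(\phi^*,w)=1$ iff $M,w\models\phi$. Quantifying over all models on $\mathfrak{F}$ then yields both directions, so $\mathfrak{F}\in\mathbb{K}$ iff $\mathfrak{F}\models_{\mathbf{K}}\Sigma$ iff $\mathfrak{F}\models_{\KG^c}\Sigma^*$.

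It remains to replace $\KG^c$ by $\KbiG$ and $\KGsquare$. Each $\phi^*$ is over $\{\mathbf{0},\wedge,\vee,\rightarrow,\Box,\lozenge\}$, so part~1 of Proposition~\ref{prop:conservativity} gives $\mathfrak{F}\models_{\KG^c}\phi^*$ iff $\mathfrak{F}\models_{\KbiG}\phi^*$; and each $\phi^*\in\bimodalL$, so part~2 gives $\mathfrak{F}\models_{\KbiG}\phi^*$ iff $\mathfrak{F}\models_{\KGsquare}\phi^*$ for crisp $\mathfrak{F}$. Chaining these equivalences, $\Sigma^*$ defines $\mathbb{K}$ in both $\KbiG$ and $\KGsquare$, as required.

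The argument is mostly bookkeeping; the one place that needs care is the inductive equivalence, and within it the modal steps — verifying that $v(\Box\psi^*,w)$ and $v(\lozenge\psi^*,w)$ remain in $\{0,1\}$ and correctly encode $\forall$ and $\exists$ over $R$-successors, the empty successor set included. The role of the ${\sim\sim}$-prefix is precisely to force the base case into $\{0,1\}$; after that, closure of $\{0,1\}$ under all the Gödel propositional and modal operations propagates two-valuedness through the whole formula, so no further regularisation of compound subformulas is needed.
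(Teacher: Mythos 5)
Your proof is correct and takes essentially the same route as the paper: the paper's own one-line argument likewise invokes the faithful embedding $p\mapsto{\sim\sim}p$ of $\mathbf{K}$ into $\KG^c$ and then transfers frame definability through Proposition~\ref{prop:conservativity}. You have simply written out the induction (two-valuedness of ${\sim\sim}$-prefixed formulas and its propagation through the crisp modal clauses) that the paper delegates to the cited references.
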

\begin{theorem}\label{theorem:finitebranching}
\begin{enumerate}
\item Let $\mathfrak{F}$ be \emph{crisp}. Then $\mathfrak{F}$ is finitely branching (i.e., $R(w)$ is finite for every $w\in\mathfrak{F}$) iff $\mathfrak{F}\models_{\KbiG}\mathbf{1}\coimplies\lozenge((p\coimplies q)\wedge q)$.
\item Let $\mathfrak{F}$ be \emph{fuzzy}. Then $\mathfrak{F}$ is finitely branching and $\sup\{wRw':wRw'<1\}<1$ for all $w\in\mathfrak{F}$ iff $\mathfrak{F}\models_{\KbiG}\mathbf{1}\coimplies\lozenge((p\coimplies q)\wedge q)$.
\end{enumerate}
\end{theorem}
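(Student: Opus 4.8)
The plan is to first pin down the value of $\chi\coloneqq(p\coimplies q)\wedge q$ at an arbitrary world and then determine exactly when $\lozenge\chi$ can attain the value $1$. The crucial observation is that for \emph{every} model and \emph{every} $w'$ we have $v(\chi,w')<1$, while $v(\chi,w')$ can be \emph{forced} to equal any prescribed $c\in[0,1)$. Indeed, by the definition of $\coimplies_\mathsf{G}$: if $v(p,w')\leq v(q,w')$ then $v(p\coimplies q,w')=0$, hence $v(\chi,w')=0$; and if $v(p,w')>v(q,w')$ then $v(p\coimplies q,w')=v(p,w')$, hence $v(\chi,w')=\min(v(p,w'),v(q,w'))=v(q,w')<1$. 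Conversely, setting $v(p,w')\coloneqq1$ and $v(q,w')\coloneqq c$ yields $v(\chi,w')=c$. Finally, since $v(\mathbf{1}\coimplies\psi,w)=1\coimplies_\mathsf{G}v(\psi,w)$ equals $1$ when $v(\psi,w)<1$ and $0$ when $v(\psi,w)=1$, the formula $\mathbf{1}\coimplies\lozenge((p\coimplies q)\wedge q)$ is $\KbiG$-valid on $\mathfrak{F}$ iff $v(\lozenge\chi,w)<1$ for every model on $\mathfrak{F}$ and every $w\in\mathfrak{F}$.

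For part~1 (crisp frames), if every $R(w)$ is finite then $v(\lozenge\chi,w)=\max\{v(\chi,w'):w'\in R(w)\}$ (and $0$ when $R(w)=\varnothing$), a maximum of finitely many reals each strictly below $1$, hence $<1$; so $\mathfrak{F}$ validates the formula. For the other direction I argue contrapositively: if some $R(w)$ is infinite, fix distinct $w_1,w_2,\dots\in R(w)$ and take a valuation with $v(p,w_n)\coloneqq1$, $v(q,w_n)\coloneqq1-\tfrac1n$ (arbitrary elsewhere); then $v(\chi,w_n)=1-\tfrac1n$, so $v(\lozenge\chi,w)=\sup_n(1-\tfrac1n)=1$, and the formula fails.

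For part~2 (fuzzy frames), I split the successors of $w$ into $A\coloneqq\{w':wRw'=1\}$ and $B\coloneqq\{w':wRw'<1\}$ and use $v(\lozenge\chi,w)=\sup_{w'\in W}\min(wRw',v(\chi,w'))$. The contribution of $B$ to this supremum is at most $\sup\{wRw':wRw'<1\}$; the contribution of $A$ equals $\sup\{v(\chi,w'):w'\in A\}$, which is a maximum strictly below $1$ whenever $A$ is finite (and $0$ when $A=\varnothing$). Hence, if at every $w$ the set $A$ is finite and $\sup\{wRw':wRw'<1\}<1$, then $v(\lozenge\chi,w)<1$ always, so the formula is valid. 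Conversely (contrapositive): if at some $w$ the set $A$ is infinite, the construction of part~1 applies verbatim since those edges have degree $1$ (so $\min(wRw_n,v(\chi,w_n))=1-\tfrac1n$); and if instead $\sup\{wRw':wRw'<1\}=1$ at some $w$, pick $w_n$ with $wRw_n<1$ and $wRw_n\to1$, set $v(p,w_n)\coloneqq1$ and $v(q,w_n)\coloneqq wRw_n$, so that $\min(wRw_n,v(\chi,w_n))=wRw_n\to1$ and again $v(\lozenge\chi,w)=1$.

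I expect the only delicate point to be the bookkeeping in part~2: checking that the two ways validity can fail correspond exactly to the negations of the two conjoined conditions, and that the suprema are handled correctly — a finite supremum of values below $1$ stays below $1$, whereas an infinite one, or a supremum of accessibility degrees that are each below $1$, need not. Everything else is a routine unfolding of $\coimplies_\mathsf{G}$ and of the $\lozenge$-clause.
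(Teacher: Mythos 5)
Your proposal is correct and follows essentially the same route as the paper's proof: establish that $(p\coimplies q)\wedge q$ is always strictly below $1$ yet can be forced arbitrarily close to $1$, reduce validity of $\mathbf{1}\coimplies\lozenge\chi$ to $v(\lozenge\chi,w)<1$, and then split the fuzzy supremum over successors with degree $1$ (finitely many, so a maximum below $1$) and degree $<1$ (bounded by the assumed supremum), with the same two countermodel constructions for the converse. Your explicit preliminary lemma and the $A$/$B$ bookkeeping are slightly more detailed than the paper's presentation, but the argument is the same.
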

\begin{proof}
We show the case of fuzzy frames since the crisp ones can be tackled in the same manner. Assume that $\mathfrak{F}$ is finitely branching and that $\sup\{wRw'\!:\!wRw'\!<\!1\}<1$ for all $w\in\mathfrak{F}$. It suffices to show that $v(\lozenge((p\coimplies q)\wedge q),w)<1$ for all $w\in\mathfrak{F}$. First of all, observe that there is no $w'\in\mathfrak{F}$ s.t.\ $v((p\coimplies q)\wedge q,w')=1$. It is clear that $\sup\limits_{wRw'<1}\{v((p\coimplies q)\wedge q,w')\wedge_\mathsf{G}wRw'\}<1$ and that
$$\sup\{v((p\coimplies q)\wedge q,w'):wRw'=1\}=\max\{v((p\coimplies q)\wedge q,w'):wRw'=1\}<1$$
since $R(w)$ is finite. But then $v(\lozenge((p\coimplies q)\wedge q),w)<1$ as required.

For the converse, either (1) $R(w)$ is infinite for some $w$, or (2) $\sup\{wRw':wRw'<1\}=1$ for some $w$. For (1), set $v(p,w')=1$ for every $w'\in R(w)$. Now let $W'\subseteq R(w)$ and $W'=\{w_i:i\in\{1,2,\ldots\}\}$. We set $v(q,w_i)=\frac{i}{i+1}$. It is easy to see that $\sup\{v(q,w_i):w_i\in W'\}=1$ and that $v((p\coimplies q)\wedge q,w_i)=v(q,w_i)$. Therefore, $v(\mathbf{1}\coimplies\lozenge((p\coimplies q)\wedge q),w)=0$.

For (2), we let $v(p,w')=1$ and further, $v(q,w')=wRw'$ for all $w'\in\mathfrak{F}$. Now since $\sup\{wRw':wRw'<1\}=1$ and $v(((p\coimplies q)\wedge q),w')=v(q,w')$ for all $w'\in\mathfrak{F}$, it follows that $v(\lozenge((p\coimplies q)\wedge q),w)=1$, whence $v(\mathbf{1}\coimplies\lozenge((p\coimplies q)\wedge q),w)=0$.
\end{proof}
\begin{remark}\label{rem:noGlivenko}
The obvious corollary of theorem~\ref{theorem:finitebranching} is the lack of FMP for the $\lozenge$-fragment of $\KbiG^\mathsf{f}$\footnote{Bi-modal $\KbiG^\mathsf{f}$ lacks have FMP since it is a conservative extension of $\KG$.} since $\lozenge((p\coimplies q)\wedge q)$ in never true in a finite model. This differentiates $\KbiG^\mathsf{f}$ from $\KG$ since the $\lozenge$-fragment of $\KG$ \emph{has} FMP~\cite[Theorem~7.1]{CaicedoRodriguez2010}. Moreover, one can define finitely branching frames in $\Box$ fragments of $\KG$ and $\KG^c$. Indeed, ${\sim\sim}\Box(p\vee{\sim}p)$ serves as such definition.
\end{remark}
\begin{corollary}
$\KGsquare$ and both $\Box$ and $\lozenge$ fragments of $\KbiG$ are strictly more expressive than $\mathbf{K}$.
\end{corollary}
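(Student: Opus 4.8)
The plan is to combine the positive definability results already in hand with one new negative fact: the class of finitely branching crisp frames is \emph{not} definable in $\mathbf{K}$. For the ``$\KGsquare$, $\KbiG^\Box$, $\KbiG^\lozenge$ are at least as expressive as $\mathbf{K}$'' direction, Theorem~\ref{theorem:definability} already covers bimodal $\KbiG$ and $\KGsquare$. For the $\Box$- and $\lozenge$-fragments of $\KbiG$ I would additionally use two standard classical facts: every class of frames definable in $\mathbf{K}$ is definable by a set of classical modal formulas using $\Box$ only (rewrite each $\lozenge\chi$ as $\neg\Box\neg\chi$ and drive negations down to the variables), and, symmetrically, by a set using $\lozenge$ only. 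Since the faithful embedding $p\mapsto{\sim\sim}p$ of $\mathbf{K}$ into $\KG^c$ (from~\cite{MetcalfeOlivetti2009,MetcalfeOlivetti2011}) translates $\neg$ on variables by ${\sim}$ and leaves all modal operators untouched, it maps such negation-normal formulas into the $\Box$-fragment (resp.\ the $\lozenge$-fragment) of $\KG^c$, which is contained in the corresponding fragment of $\KbiG$; a routine induction, using that Gödel $\wedge,\vee,\lozenge$ restrict to their classical counterparts on $\{0,1\}$, shows the translation preserves frame validity frame by frame. Hence each of the three logics defines every $\mathbf{K}$-definable class.

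For strictness I would use the class of finitely branching crisp frames as the separating example. It is definable in the $\lozenge$-fragment of $\KbiG$ by $\mathbf{1}\coimplies\lozenge((p\coimplies q)\wedge q)$ (Theorem~\ref{theorem:finitebranching}(1)), hence in $\KGsquare$ by Proposition~\ref{prop:conservativity}(2) applied to that $\bimodalL$-formula, and in the $\Box$-fragment of $\KbiG$ by ${\sim\sim}\Box(p\vee{\sim}p)$ (Remark~\ref{rem:noGlivenko}). It remains to see that no set $\Sigma$ of classical modal formulas defines this class. Suppose $\Sigma$ were valid on every finitely branching frame; then $\Sigma$ is in particular valid on every \emph{finite} frame, so by the finite model property of $\mathbf{K}$ (equivalently, the selection/filtration construction: a modal formula refuted at a point of some model is refuted at a point of a finite submodel) each $\sigma\in\Sigma$ that failed on any frame would already fail on some finite, hence finitely branching, frame --- contradiction. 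Thus $\Sigma$ is valid on \emph{all} frames, and since there exist crisp frames that are not finitely branching (e.g.\ a root seeing $\omega$ many dead endpoints), $\Sigma$ cannot define the class. So finitely branching frames form a $\KGsquare$-, $\KbiG^\Box$- and $\KbiG^\lozenge$-definable class that is not $\mathbf{K}$-definable, which together with the first paragraph gives strict inclusion of expressive power.

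The main obstacle is the non-definability in $\mathbf{K}$; it becomes short once one invokes the finite model property of $\mathbf{K}$, but that appeal (or reproving the needed selection lemma) is the essential content. A secondary, more bookkeeping-flavoured point is checking that the embedding of $\mathbf{K}$ into $\KG^c$ respects the restriction to a single modality, so that the $\Box$- and $\lozenge$-fragments of $\KbiG$ genuinely capture everything $\mathbf{K}$ expresses; this is where I would be most careful to state the negation-normal form precisely before applying $p\mapsto{\sim\sim}p$.
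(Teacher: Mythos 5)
Your proposal is correct and follows essentially the same route as the paper: the ``at least as expressive'' half is Theorem~\ref{theorem:definability}, and strictness comes from the class of finitely branching frames, which Theorem~\ref{theorem:finitebranching} (together with Remark~\ref{rem:noGlivenko} for the $\Box$-fragment and Proposition~\ref{prop:conservativity} for $\KGsquare$) makes definable in each of the three logics but which cannot be $\mathbf{K}$-definable because $\mathbf{K}$ is complete with respect to both all frames and all finitely branching frames --- your FMP/selection argument is exactly this completeness fact spelled out. One small slip in your optional elaboration on the single-modality fragments: you cannot simultaneously ``rewrite $\lozenge\chi$ as $\neg\Box\neg\chi$'' and ``drive negations down to the variables'' (the latter reintroduces $\lozenge$); but no normal form is needed, since under $p\mapsto{\sim\sim}p$ every subformula becomes $\{0,1\}$-valued, so classical negation in arbitrary positions is faithfully rendered by ${\sim}$ and the $\Box$-only (resp.\ $\lozenge$-only) rewriting alone already lands in the desired fragment.
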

\begin{proof}
From theorems~\ref{theorem:definability} and~\ref{theorem:finitebranching} since $\mathbf{K}$ is complete both w.r.t.\ all frames and all finitely branching frames. The result for $\KGsquare$ follows since it is conservative over $\KbiG$ (proposition~\ref{prop:conservativity}).
\end{proof}
These results show us that addition of $\coimplies$ greatly enhances the expressive power of our logic. Here it is instructive to remind ourselves that classical epistemic logics are usually complete w.r.t.\ finitely branching frames (cf.~\cite{FaginHalpernMosesVardi2003} for details). It is reasonable since for practical reasoning, agents cannot consider infinitely many alternatives. In our case, however, if we wish to use $\KbiG$ and $\KGsquare$ for knowledge representation, we need to \emph{impose} finite branching explicitly.


Furthermore, allowing for infinitely branching frames in $\KbiG$ or $\KGsquare$ leads to counter-intuitive consequences. In particular, it is possible that $v(\Box\phi,w)=(0,1)$ even though there are no $w',w''\in R(w)$ s.t. $v_1(\phi,w')=0$ or $v_2(\phi,w'')=1$. In other words, there is no source that decisively falsifies $\phi$, furthermore, all sources have some evidence \emph{for} $\phi$, and yet we somehow believe that $\phi$ is completely false and untrue. Dually, it is possible that $v(\lozenge\phi,w)=(1,0)$ although there are no $w',w''\in R(w)$ s.t. $v_1(\phi,w')=1$ or $v_2(\phi,w'')=0$. Even though $\lozenge$ is an ‘optimistic’ aggregation, it should not ignore the fact that \emph{all} sources have some evidence \emph{against} $\phi$ but \emph{none} supports it completely.

Of course, this situation is impossible if we consider only finitely branching frames for infima and suprema will become minima and maxima. There, all values of modal formulas will be \emph{witnessed} by some accessible states in the following sense. For $\heartsuit\in\{\Box,\lozenge\}$, $i\in\{1,2\}$, if $v_i(\heartsuit\phi,w)=x$, then there is $w'\in R(w)$ s.t. $v_i(\phi,w')=x$. Intuitively speaking, finitely branching frames represent the situation when our degree of certainty in some statement is based uniquely on the data given by the sources.
\begin{convention}
We will further use $\fbKbiG$ and $\fbKGsquare$ to denote the sets of all $\bimodalL$ and $\bimodalLsquare$ formulas valid on finitely branching crisp frames.
\end{convention}
Observe, moreover, that $\Box$ and $\lozenge$ are still undefinable via one another in $\bimodalL$. The proof is the same as that of~\cite[Lemma~6.1]{RodriguezVidal2021}.
\begin{proposition}\label{prop:+noninterdefinability}
$\Box$ and $\lozenge$ are not interdefinable in $\fbKbiG$.
\end{proposition}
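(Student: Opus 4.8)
The plan is to adapt the non-interdefinability argument of \cite[Lemma~6.1]{RodriguezVidal2021} to the bi-Gödel setting with $\coimplies$. The idea is to exhibit, for each candidate translation, a pair of pointed finitely branching crisp models that agree on all formulas built from the ‘defined-away’ modality while disagreeing on the ‘defining’ modality. Concretely, to show $\Box$ is not definable from $\lozenge$ (and the $\coimplies$-free, $\neg$-free connectives), I would take a one-step model: a root $w_0$ with successors carrying suitable values of a single propositional variable $p$, and a second model differing only in the values at the successors (or in which successors are present). In one model $v(\Box p,w_0)$ takes a value strictly between $0$ and $1$ — say by having two successors with $v(p,\cdot)$ equal to $1$ and to some $c\in(0,1)$ — while in the other model we add or replace successors so that the infimum drops, but in both models the $\lozenge$-reachable ‘profile’ of values coincides, so every $\lozenge$-formula gets the same value at the root.

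The key steps, in order: (i) fix the target equivalence — two pointed models $(\mathfrak{M},w_0)$, $(\mathfrak{M}',w_0')$ that are \emph{$\lozenge$-bisimilar} in the appropriate graded sense, meaning they satisfy the same $\bimodalL$-formulas in which $\Box$ does not occur; (ii) verify by induction on formula complexity that such $\lozenge$-bisimilar models really do agree on all $\Box$-free formulas — the propositional cases use only that the \emph{values at $w_0$} of the variables agree, and the $\lozenge$ case uses that the sets $\{v(\psi,w'):w_0Rw'\}$ and $\{v(\psi,w''):w_0'Rw''\}$ have the same supremum (for which it is enough that the value-multisets coincide, or more weakly that their up-sets/closures agree); (iii) compute $v(\Box p,w_0)\neq v(\Box p,w_0')$ directly, exhibiting the concrete numbers. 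Since we work over finitely branching frames, infima and suprema are minima and maxima, which makes step (ii) a routine finite check and lets us pin down witnessing successors explicitly. The symmetric direction (that $\lozenge$ is not definable from $\Box$) is handled by the dual construction.

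I would also note explicitly that adding $\coimplies$ (equivalently $\Delta$) and $\neg$ to the language does not help define $\Box$ from $\lozenge$: $\coimplies$ and $\Delta$ are propositional connectives evaluated pointwise, so they are covered automatically by the propositional induction cases in step (ii), and $\neg$ merely swaps $v_1$ with $v_2$, so one runs the same argument with the two models chosen to be $\neg$-symmetric (or simply observes that the proposition is stated for $\fbKbiG$, whose language $\bimodalL$ is already $\neg$-free). The only subtlety is to make sure the two models are genuinely distinguishable by \emph{some} $\bimodalL$-formula, i.e.\ that $\Box p$ really does witness the difference and is not itself expressible via $\lozenge$; this is exactly what the construction is designed to guarantee.

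The main obstacle I anticipate is getting the right notion of ‘$\lozenge$-agreement’ between the two models so that the induction in step (ii) actually goes through for \emph{all} $\Box$-free formulas — in particular handling nested $\lozenge$'s and the interaction of $\lozenge$ with $\rightarrow_\mathsf{G}$ and $\coimplies_\mathsf{G}$, whose non-monotone/threshold behaviour means one cannot simply track suprema but must ensure the full induced value-sets (or their closures under the relevant Gödel operations) coincide at every depth. Choosing the two models with enough structural rigidity — e.g.\ identical successor sets and identical valuations except at one carefully chosen successor, or a back-and-forth system of matching successors — should tame this, reducing the whole argument to the finite bookkeeping already carried out in \cite{RodriguezVidal2021}.
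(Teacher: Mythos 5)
Your proposal follows essentially the same route as the paper, which simply remarks that the proof is the same as that of Lemma~6.1 of Rodriguez--Vidal and gives no further detail. Your plan --- two pointed finitely branching crisp models that agree on every formula of one modal fragment at the root but are separated by the other modality, verified by an induction that tracks the full value-sets of the successors (and their order type relative to $0$ and $1$) rather than just suprema --- is exactly the content of that cited argument, and your identification of the threshold behaviour of $\rightarrow_\mathsf{G}$ and $\coimplies_\mathsf{G}$ as the step requiring care is the right one.
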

\begin{corollary}
\begin{enumerate}
\item[]
\item $\Box$ and $\lozenge$ are not interdefinable in $\KbiG$, $\fbKbiG^\mathsf{f}$, and $\KbiG^\mathsf{f}$.
\item Both $\Box$ and $\lozenge$ fragments of $\KbiG$ are more expressive than $\mathbf{K}$.
\end{enumerate}
\end{corollary}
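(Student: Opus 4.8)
The plan is to reduce the statement to two mutual non-definability claims and, in each, to separate the modality in question from the rest of the language by a pair of small pointed models, exactly as in~\cite[Lemma~6.1]{RodriguezVidal2021}; the only genuinely new point is that the coimplication $\coimplies$ (together with the defined constants $\mathbf{0},\mathbf{1}$ and the negation ${\sim}$) does not interfere with the argument.

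To show that $\lozenge$ is not equivalent to any $\lozenge$-free $\bimodalL$-formula, I would take two finite (hence finitely branching, crisp) models: $\mathfrak{M}$ with a root $w$ and two dead-end successors $u_1,u_2$, and $\mathfrak{M}'$ with a root $w'$ and two dead-end successors $u_1',u_2'$, setting $v(p,w)=v'(p,w')=\tfrac12$, $v(p,u_1)=v'(p,u_1')=\tfrac13$, $v(p,u_2)=\tfrac23$ and $v'(p,u_2')=\tfrac12$. Then $v(\lozenge p,w)=\tfrac23\ne\tfrac12=v'(\lozenge p,w')$, so it suffices to check, by induction on $\theta$, that $v(\theta,w)=v'(\theta,w')$ for every $\lozenge$-free $\theta\in\bimodalL$ (any variable other than $p$ being assigned equal values in $\mathfrak{M}$ and $\mathfrak{M}'$); this refutes definability. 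For the converse I would use the dual models, whose successors carry the values $\tfrac13,\tfrac23$ and $\tfrac12,\tfrac23$ respectively, so that $v(\Box p,w)=\tfrac13\ne\tfrac12=v'(\Box p,w')$ while all $\Box$-free formulas still agree (alternatively this direction follows from the first by the order-reversing bijection $x\mapsto 1-x$ of $[0,1]$, which swaps $\wedge_\mathsf{G}/\vee_\mathsf{G}$, $\rightarrow_\mathsf{G}/\coimplies_\mathsf{G}$ and $\Box/\lozenge$).

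The only non-routine case of the induction is $\theta=\Box\eta$ (respectively $\theta=\lozenge\eta$). Since the successors are dead ends, $v(\eta,u_i)$ depends only on $v(p,u_i)$: it equals $\widehat\eta(v(p,u_i))$, where $\widehat\eta$ is the one-variable $[0,1]_\mathsf{G}$-term function obtained from $\eta$ by replacing every outermost $\Box\psi$ with $\mathbf{1}$ (and the same $\widehat\eta$ governs $\mathfrak{M}'$). Hence $v(\Box\eta,w)=\min(\widehat\eta(\tfrac13),\widehat\eta(\tfrac23))$ and $v'(\Box\eta,w')=\min(\widehat\eta(\tfrac13),\widehat\eta(\tfrac12))$, and these agree once one knows the rigidity fact that \emph{every one-variable $[0,1]_\mathsf{G}$-term function, restricted to $(0,1)$, is constant $0$, constant $1$, or the identity}: if it is the identity both minima equal $\tfrac13$, and if constant both equal that constant; the $\lozenge\eta$ case is symmetric, with $\max$ and common value $\tfrac23$.

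The main obstacle is establishing this rigidity fact \emph{with $\coimplies$ in the signature}. I would argue that each of $\wedge_\mathsf{G},\vee_\mathsf{G},\rightarrow_\mathsf{G},\coimplies_\mathsf{G}$ returns one of its arguments or an element of $\{0,1\}$, chosen solely according to the $\leq$-order of its arguments; hence each commutes with every order-isomorphism $\pi$ of $\langle[0,1],\leq\rangle$, and therefore so does every term function $t(p)$ (the values $0,1$ of the constant terms $\mathbf{0},\mathbf{1}$ being fixed by all such $\pi$). From $t(\pi(a))=\pi(t(a))$ one first gets $t(a)\in\{0,a,1\}$ for every $a\in(0,1)$ — two order-isomorphisms agreeing at $a$ may differ at every other interior point — and then, applying an order-isomorphism that carries one interior point to another, one sees that $t$ realises the same one of these three behaviours throughout $(0,1)$. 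This closes the argument; the remaining propositional induction steps are immediate from the definitions.
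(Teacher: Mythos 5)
Your handling of part~1 is correct and is in substance the paper's own route: the paper derives part~1 immediately from Proposition~\ref{prop:+noninterdefinability} (non-interdefinability already over finitely branching \emph{crisp} frames, proved ``as in'' \cite[Lemma~6.1]{RodriguezVidal2021}), and since your countermodels are finite and crisp they belong to every frame class named in the statement, so the transfer to $\KbiG$, $\fbKbiG^\mathsf{f}$ and $\KbiG^\mathsf{f}$ is automatic --- though you should say explicitly that a crisp frame is a fuzzy frame on which the two modal clauses coincide. You actually do more than the paper by verifying that $\coimplies$, $\mathbf{0}$, $\mathbf{1}$ and ${\sim}$ do not break the argument; your rigidity lemma (every one-variable bi-G\"{o}del term function is $0$, $1$ or the identity on $(0,1)$, via commutation with order-automorphisms of $[0,1]$) is correct and is exactly the point that needs checking. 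One small repair: that lemma is stated for one-variable terms, so either reduce to one-variable defining formulas by closure of validity under uniform substitution, or fix all variables other than $p$ at $0$ or $1$ rather than at arbitrary ``equal values''.

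The genuine gap is part~2. That both the $\Box$ and the $\lozenge$ fragments of $\KbiG$ are more expressive than $\mathbf{K}$ is not a consequence of non-interdefinability, and your proposal never addresses it. In the paper it follows from the frame-definability results: each fragment defines every $\mathbf{K}$-definable class of crisp frames (Theorem~\ref{theorem:definability}, via the embedding $p\mapsto{\sim\sim}p$ together with the classical interdefinability of $\Box$ and $\lozenge$ in $\mathbf{K}$), and each fragment moreover defines the class of finitely branching frames --- $\mathbf{1}\coimplies\lozenge((p\coimplies q)\wedge q)$ for the $\lozenge$ fragment (Theorem~\ref{theorem:finitebranching}) and ${\sim\sim}\Box(p\vee{\sim}p)$ for the $\Box$ fragment (Remark~\ref{rem:noGlivenko}) --- a class that is not $\mathbf{K}$-definable because $\mathbf{K}$ is complete with respect to both all frames and all finitely branching frames. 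Without some such argument the corollary is only half proved.
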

In the remainder of the paper, we are going to provide a complete proof system for $\fbKGsquare$ (and hence, $\fbKbiG$), and establish its decidability and complexity as well as finite model property. Note, however, that the latter is not entirely for granted. In fact, several expected ways of defining filtration (cf.~\cite{ChagrovZakharyaschev1997,BlackburndeRijkeVenema2010} for more details thereon) fail.

Let $\Sigma\subseteq\bimodalL$ be closed under subformulas. If we want to have filtration for $\fbKbiG$, there are three intuitive ways to define $\sim_\Sigma$ on the carrier of a~model that is supposed to relate states satisfying the same formulas.
\begin{enumerate}
\item $w\sim^1_\Sigma w'$ iff $v(\phi,w)=v(\phi,w')$ for all $\phi\in\Sigma$.
\item $w\sim^2_\Sigma w'$ iff $v(\phi,w)=1\Leftrightarrow v(\phi,w')=1$ for all $\phi\in\Sigma$.
\item $w\sim^3_\Sigma w'$ iff $v(\phi,w)\leq v(\phi',w)\Leftrightarrow v(\phi,w')\leq v(\phi',w')$ for all $\phi,\phi'\!\in\!\Sigma\!\cup\!\{\mathbf{0},\!\mathbf{1}\}$.
\end{enumerate}
Consider the model on fig.~\ref{fig:filtrationcounterexample} and two formulas:
\begin{align*}
\phi^\leq&\coloneqq{\sim\sim}(p\rightarrow\lozenge p)
&
\phi^>&\coloneqq{\sim\sim}(p\coimplies\lozenge p)
\end{align*}
Now let $\Sigma$ to be the set of all subformulas of $\phi^\leq\wedge\phi^>$.
\begin{figure}
\[\mathfrak{M}:\xymatrix{w_1\ar[r]&w_2\ar[r]&\ldots\ar[r]&w_n\ar[r]&\ldots}\]
\caption{$v(p,w_n)=\frac{1}{n+1}$}
\label{fig:filtrationcounterexample}
\end{figure}

First of all, it is clear that $v(\phi^\leq\wedge\phi^>,w)=1$ for any $w\in\mathfrak{M}$. Observe now that all states in $\mathfrak{M}$ are \emph{distinct} w.r.t. $\sim^1_\Sigma$. Thus, the first way of constructing the carrier of the new model does not give the FMP.

As regards to $\sim^2_\Sigma$ and $\sim^3_\Sigma$, one can check that for any $w,w'\in\mathfrak{M}$, it holds that $w\sim^2_\Sigma w'$ and $w\sim^3_\Sigma w'$. So, if we construct a filtration of $\mathfrak{M}$ using equivalence classes of either of these two relations, the carrier of the resulting model is going to be finite. Even more so, it is going to be a singleton.

However, we can show that there is \emph{no finite model} $\mathfrak{N}=\langle U,S,e\rangle$ s.t.
\[\forall s\in\mathfrak{N}:v(\phi^\leq\wedge\phi^>,s)=1.\]
Indeed, $e(\phi^\leq,t)=1$ iff $e(p,t')>0$ for some $t'\in S(t)$, while $e(\phi^>,t)=1$ iff $v(p,t)>v(p,t')$ for any $t'\in S(t)$. Now, if $U$ is finite, we have two options: either (1) there is $u\in U$ s.t. $R(u)=\varnothing$, or (2) $U$ contains a finite $S$-cycle.

For (1), note that $v(\lozenge p,u)=0$, and we have two options: if $e(p,u)=0$, then $e(\phi^>,u)=0$; if, on the other hand, $e(p,u)>0$, then $e(\phi^\leq,u)=0$. For (2), assume w.l.o.g. that the $S$-cycle looks as follows: $u_0Su_1Su_2\ldots Su_nSu_0$.

If $e(p,u_0)\!=\!0$, $e(\phi^>,u_0)\!=\!0$, so $e(p,u_0)\!>\!0$. Furthermore, $e(p,u_i)\!>\!e(p,u_{i+1})$. Otherwise, again, $e(\phi^>,u_i)=0$. But then we have $e(\phi^>,u_i)=0$.

But this means that $\sim^2_\Sigma$ and $\sim^3_\Sigma$ do not preserve truth of formulas from $w$ to $[w]_\Sigma$, i.e., neither of these two relations can be used to define filtration. Thus, in order to explicitly prove the finite model property and establish complexity evaluations for $\fbKbiG$ and $\fbKGsquare$, we will provide a tableaux calculus. It will also serve as a decision procedure for satisfiability and validity of formulas.
\section{Tableaux for $\fbKGsquare$}\label{sec:tableaux}
Usually, proof theory for modal and many-valued logics is presented in one of the following several forms. The first one is a Hilbert-style axiomatisation as given in e.g.~\cite{Hajek1998} for the propositional G\"{o}del logic and in~\cite{CaicedoRodriguez2010,CaicedoRodriguez2015,RodriguezVidal2021} for its modal expansions. Hilbert calculi are useful for establishing frame correspondence results as well as for showing that one logic extends another one in the same language. On the other hand, their completeness proofs might be quite complicated, and the proof-search not at all straightforward. Second, there are non-labelled sequent and hyper-sequent calculi (cf.~\cite{MetcalfeOlivettiGabbay2008} for the propositional proof systems and~\cite{MetcalfeOlivetti2009,MetcalfeOlivetti2011} for the modal hypersequent calculi). With regards to modal logics, completeness proofs of (hyper)sequent calculi often provide the answer for the decidability problem. Furthermore, the proof search can be quite straightforwardly automatised provided that the calculus is \emph{cut-free}.

Finally, there are proof systems that directly incorporate semantics: in particular, tableaux (e.g., the ones for G\"{o}del logics~\cite{AvronKonikowska2001} and tableaux for \L{}ukasiewicz description logic~\cite{KulackaPattinsonSchroeder2013}) and labelled sequent calculi (cf., e.g.~\cite{Negri2005} for labelled sequent calculi for classical modal logics). Because of the calculi's nature, their completeness proofs are usually simple. Besides, the calculi serve as a decision procedure that either establishes that the given formula is valid or provides an explicit countermodel.

Our tableaux system $\mathcal{T}\!\left(\fbKGsquare\right)$ is a~straightforward modal expansion of constraint tableaux for $\Gsquare$ presented in~\cite{BilkovaFrittellaKozhemiachenko2021}. It is inspired by constraint tableaux for \L{}u\-ka\-sie\-wicz logics from~\cite{Haehnle1992,Haehnle1994} (but cf.~\cite{diLascioGisolfi2005} for an approach similar to ours) which we modify with two-sorted labels corresponding to the support of truth and support of falsity in the model. This idea comes from tableaux for the Belnap --- Dunn logic by D'Agostino~\cite{DAgostino1990}. Moreover, since $\fbKGsquare$ is a~conservative extension of $\fbKbiG$, our calculus can be used for that logic as well if we apply only the rules that govern the support of truth of $\bimodalL$ formulas.
\begin{definition}[$\mathcal{T}\!\left(\fbKGsquare\right)$]\label{def:KG2constrainttableau} We fix a set of state-labels $\mathsf{W}$ and let $\lesssim\in\!\{<,\leqslant\}$ and $\gtrsim\in\!\{>,\geqslant\}$. Let further $w\!\in\!\mathsf{W}$, $\mathbf{x}\!\in\!\{1,2\}$, $\phi\!\in\!\bimodalLsquare$, and $c\!\in\!\{0,1\}$. A~structure is either $w\!:\!\mathbf{x}\!:\!\phi$ or $c$. We denote the set of structures with $\mathsf{Str}$.

We define a \emph{constraint tableau} as a downward branching tree whose branches are sets containing the following types of entries:
\begin{itemize}[noitemsep,topsep=2pt]
\item \emph{relational constraints} of the form $w\mathsf{R}w'$ with $w,w'\in\mathsf{W}$;
\item \emph{structural constraints} of the form $\mathfrak{X}\lesssim\mathfrak{X}'$ with $\mathfrak{X},\mathfrak{X}'\in\mathsf{Str}$.
\end{itemize}
Each branch can be extended by an application of a~rule\footnote{If $\mathfrak{X}<1$ and $\mathfrak{X}<\mathfrak{X}'$ (or $0<\mathfrak{X}'$ and $\mathfrak{X}<\mathfrak{X}'$) occur on $\mathcal{B}$, then the rules are applied only to $\mathfrak{X}<\mathfrak{X}'$.} from fig.~\ref{fig:G2propositionalrules} or fig.~\ref{fig:modalrules}.
\begin{figure}
\centering
\[\begin{array}{cccc}
\neg_1\!\lesssim\!\dfrac{w\!:\!1\!:\!\neg\phi\!\lesssim\!\mathfrak{X}}{w\!:\!2\!:\!\phi\!\lesssim\!\mathfrak{X}}
& \quad
\neg_2\!\lesssim\!\dfrac{w\!:\!2\!:\!\neg\phi\!\lesssim\!\mathfrak{X}}{w\!:\!1\!:\!\phi\!\lesssim\!\mathfrak{X}}
& \quad
\neg_1\!\gtrsim\!\dfrac{w\!:\!1\!:\!\neg\phi\!\gtrsim\!\mathfrak{X}}{w\!:\!2\!:\!\phi\!\gtrsim\!\mathfrak{X}}
& \quad
\neg_2\!\gtrsim\!\dfrac{w\!:\!2\!:\!\neg\phi\!\gtrsim\!\mathfrak{X}}{w\!:\!1\!:\!\phi\!\gtrsim\!\mathfrak{X}}
\end{array}\]
\smallskip
\[\begin{array}{cccc}
\wedge_1\!\gtrsim\!\dfrac{w\!:\!1\!:\!\phi\!\wedge\!\phi'\!\gtrsim\!\mathfrak{X}}{\begin{matrix}w\!:\!1\!:\!\phi\!\gtrsim\!\mathfrak{X}\\w\!:\!1\!:\!\phi'\!\gtrsim\!\mathfrak{X}\end{matrix}}
&
\wedge_2\!\lesssim\!\dfrac{w\!:\!2\!:\!\phi\!\wedge\!\phi'\!\lesssim\!\mathfrak{X}}{\begin{matrix}w\!:\!2\!:\!\phi\!\lesssim\!\mathfrak{X}\\w\!:\!2\!:\!\phi'\!\lesssim\!\mathfrak{X}\end{matrix}}
&
\vee_1\!\lesssim\!\dfrac{w\!:\!1\!:\!\phi\!\vee\!\phi'\!\lesssim\!\mathfrak{X}}{\begin{matrix}w\!:\!1\!:\!\phi\!\lesssim\!\mathfrak{X}\\w\!:\!1\!:\!\phi'\!\lesssim\!\mathfrak{X}\end{matrix}}
&
\vee_2\!\gtrsim\!\dfrac{w\!:\!2\!:\!\phi\!\vee\!\phi'\!\gtrsim\!\mathfrak{X}}{\begin{matrix}w\!:\!2\!:\!\phi\!\gtrsim\!\mathfrak{X}\\w\!:\!2\!:\!\phi'\!\gtrsim\!\mathfrak{X}\end{matrix}}
\end{array}\]
\[\begin{array}{cc}
\wedge_1\!\lesssim\!\dfrac{w\!:\!1\!:\!\phi\wedge\phi'\!\lesssim\!\mathfrak{X}}{w\!:\!1\!:\!\phi\!\lesssim\!\mathfrak{X}\mid w\!:\!1\!:\!\phi'\!\lesssim\!\mathfrak{X}}
&\quad
\wedge_2\!\gtrsim\!\dfrac{w\!:\!2\!:\!\phi\wedge\phi'\!\gtrsim\!\mathfrak{X}}{w\!:\!2\!:\!\phi\!\gtrsim\!\mathfrak{X}\mid w\!:\!2\!:\!\phi'\!\gtrsim\!\mathfrak{X}}\\
&\\
\vee_1\!\gtrsim\!\dfrac{w\!:\!1\!:\!\phi\vee\phi'\!\gtrsim\!\mathfrak{X}}{w\!:\!1\!:\!\phi\!\gtrsim\!\mathfrak{X}\mid w\!:\!1\!:\!\phi'\!\gtrsim\!\mathfrak{X}}
& \qquad
\vee_2\!\lesssim\!\dfrac{w\!:\!2\!:\!\phi\vee\phi'\!\lesssim\!\mathfrak{X}}{w\!:\!2\!:\!\phi\!\lesssim\!\mathfrak{X}\mid w\!:\!2\!:\!\phi'\!\lesssim\!\mathfrak{X}}\\
\end{array}\]
\smallskip
\[\begin{array}{ccc}
\rightarrow_1\!\leqslant\!\dfrac{w\!:\!1\!:\!\phi\rightarrow\phi'\!\leqslant\!\mathfrak{X}}{\mathfrak{X}\!\geqslant\!{1}\left|\begin{matrix}\mathfrak{X}\!<\!{1}\\w\!:\!1\!:\!\phi'\!\leqslant\!\mathfrak{X}\\w\!:\!1\!:\!\phi\!>\!w\!:\!1\!:\!\phi'\end{matrix}\right.}&\rightarrow_1\!\gtrsim\!\dfrac{w\!:\!1\!:\!\phi\rightarrow\phi'\!\gtrsim\!\mathfrak{X}}{w\!:\!1\!:\!\phi\!\leqslant\!w\!:\!1\!:\!\phi'\mid w\!:\!1\!:\!\phi'\!\gtrsim\!\mathfrak{X}}\\
&\\
\rightarrow_2\!\lesssim\!\dfrac{w\!:\!2\!:\!\phi\rightarrow\phi'\!\lesssim\!\mathfrak{X}}{w\!:\!2\!:\!\phi'\!\leqslant\!w\!:\!2\!:\!\phi\mid w\!:\!2\!:\!\phi'\!\lesssim\!\mathfrak{X}}&\rightarrow_2\!\geqslant\!\dfrac{w\!:\!2\!:\!\phi\rightarrow\phi'\!\geqslant\!\mathfrak{X}}{\mathfrak{X}\!\leqslant\!{0}\left|\begin{matrix}\mathfrak{X}\!>\!{0}\\w\!:\!2\!:\!\phi'\!\geqslant\!\mathfrak{X}\\w\!:\!2\!:\!\phi'\!>\!w\!:\!2\!:\!\phi\end{matrix}\right.}\\
&\\
\Yleft_1\!\lesssim\!\dfrac{w\!:\!1\!:\!\phi\!\Yleft\!\phi'\!\lesssim\!\mathfrak{X}}{w\!:\!1\!:\!\phi\!\leqslant\!w\!:\!1\!:\!\phi'\mid w\!:\!1\!:\!\phi\!\lesssim\!\mathfrak{X}}&\Yleft_1\!\geqslant\!\dfrac{w\!:\!1\!:\!\phi\!\Yleft\!\phi'\!\geqslant\!\mathfrak{X}}{\mathfrak{X}\!\leqslant\!{0}\left|\begin{matrix}\mathfrak{X}\!>\!{0}\\w\!:\!1\!:\!\phi\!\geqslant\!\mathfrak{X}\\w\!:\!1\!:\!\phi\!>\!w\!:\!1\!:\!\phi'\end{matrix}\right.}\\
&\\
\Yleft_2\!\gtrsim\!\dfrac{w\!:\!2\!:\!\phi\Yleft\phi'\!\gtrsim\!\mathfrak{X}}{w\!:\!2\!:\!\phi\!\gtrsim\!\mathfrak{X}\mid w\!:\!2\!:\!\phi'\!\leqslant\!w\!:\!2\!:\!\phi}&\Yleft_2\!\leqslant\!\dfrac{w\!:\!2\!:\!\phi\Yleft\phi'\!\leqslant\!\mathfrak{X}}{\mathfrak{X}\!\geqslant\!{1}\left|\begin{matrix}\mathfrak{X}\!<\!{1}\\w\!:\!2\!:\!\phi\!\leqslant\!\mathfrak{X}\\w\!:\!2\!:\!\phi'\!>\!w\!:\!2\!:\!\phi\end{matrix}\right.}
\end{array}\]
\[\begin{array}{cc}
\rightarrow_1\!<\!\dfrac{w\!:\!1\!:\!\phi\rightarrow\phi'\!<\!\mathfrak{X}}{\begin{matrix}w\!:\!1\!:\!\phi'\!<\!\mathfrak{X}\\w\!:\!1\!:\!\phi\!>\!w\!:\!1\!:\!\phi'\end{matrix}}
&
\rightarrow_2\!>\!\dfrac{w\!:\!2\!:\!\phi\rightarrow\phi'\!>\!\mathfrak{X}}{\begin{matrix}w\!:\!2\!:\!\phi'\!>\!\mathfrak{X}\\w\!:\!2\!:\!\phi'\!>\!w\!:\!2\!:\!\phi\end{matrix}}
\end{array}\]
\smallskip
\[\begin{array}{cc}
\Yleft_1\!>\!\dfrac{w\!:\!1\!:\!\phi\!\Yleft\!\phi'\!>\!\mathfrak{X}}{\begin{matrix}w\!:\!1\!:\!\phi\!>\!\mathfrak{X}\\w\!:\!1\!:\!\phi\!>\!w\!:\!1\!:\!\phi'\end{matrix}}
&
\Yleft_2\!<\!\dfrac{w\!:\!2\!:\!\phi\Yleft\phi'\!<\!\mathfrak{X}}{\begin{matrix}w\!:\!2\!:\!\phi\!<\!\mathfrak{X}\\w\!:\!2\!:\!\phi\!<\!w\!:\!2\!:\!\phi'\end{matrix}}
\end{array}\]
\caption{Propositional rules of $\mathcal{T}\left(\fbKGsquare\right)$. Bars denote branching.}
\label{fig:G2propositionalrules}
\end{figure}
\begin{figure}
\[\begin{array}{cccc}
\Box_1\!\gtrsim\dfrac{\begin{matrix}w\!:\!1\!:\!\Box\phi\gtrsim\mathfrak{X}\\wRw'\end{matrix}}{\begin{matrix}w'\!:\!1\!:\!\phi\gtrsim\mathfrak{X}\end{matrix}}&\Box_1\!\lesssim\dfrac{\begin{matrix}w\!:\!1\!:\!\Box\phi\lesssim\mathfrak{X}\end{matrix}}{\begin{matrix}wRw''\\w''\!:\!1\!:\!\phi\lesssim\mathfrak{X}\end{matrix}}&\Box_2\!\gtrsim\dfrac{\begin{matrix}w\!:\!2\!:\!\Box\phi\gtrsim\mathfrak{X}\end{matrix}}{\begin{matrix}wRw''\\w''\!:\!2\!:\!\phi\gtrsim\mathfrak{X}\end{matrix}}&\Box_2\!\lesssim\dfrac{\begin{matrix}w\!:\!2\!:\!\Box\phi\lesssim\mathfrak{X}\\wRw'\end{matrix}}{\begin{matrix}w'\!:\!2\!:\!\phi\lesssim\mathfrak{X}\end{matrix}}
\\
\lozenge_1\!\lesssim\dfrac{\begin{matrix}w\!:\!1\!:\!\lozenge\phi\lesssim\mathfrak{X}\\wRw'\end{matrix}}{\begin{matrix}w'\!:\!1\!:\!\phi\lesssim\mathfrak{X}\end{matrix}}&\lozenge_1\!\gtrsim\dfrac{\begin{matrix}w\!:\!1\!:\!\lozenge\phi\gtrsim\mathfrak{X}\end{matrix}}{\begin{matrix}wRw''\\w''\!:\!1\!:\!\phi\gtrsim\mathfrak{X}\end{matrix}}&\lozenge_2\!\lesssim\dfrac{\begin{matrix}w\!:\!2\!:\!\lozenge\phi\lesssim\mathfrak{X}\end{matrix}}{\begin{matrix}wRw''\\w''\!:\!2\!:\!\phi\lesssim\mathfrak{X}\end{matrix}}&\lozenge_2\!\gtrsim\dfrac{\begin{matrix}w\!:\!2\!:\!\lozenge\phi\gtrsim\mathfrak{X}\\wRw'\end{matrix}}{\begin{matrix}w'\!:\!2\!:\!\phi\gtrsim\mathfrak{X}\end{matrix}}
\end{array}\]
\caption{Modal rules of $\mathcal{T}\left(\fbKGsquare\right)$. $w''$ is fresh on the branch.}
\label{fig:modalrules}
\end{figure}

A tableau's branch $\mathcal{B}$ is \emph{closed} iff one of the following conditions applies:
\begin{itemize}[noitemsep,topsep=2pt]
\item the transitive closure of $\mathcal{B}$ under $\lesssim$ contains $\mathfrak{X}<\mathfrak{X}$;
\item ${0}\geqslant{1}\in\mathcal{B}$, or $\mathfrak{X}>{1}\in\mathcal{B}$, or $\mathfrak{X}<{0}\in\mathcal{B}$.
\end{itemize}
A tableau is \emph{closed} iff all its branches are closed. We say that there is a \emph{tableau proof} of $\phi$ iff there is a closed tableau starting from the constraint $w\!:\!1\!:\!\phi<1$.

An open branch $\mathcal{B}$ is \emph{complete} iff the following condition is met.
\begin{itemize}
\item[$*$]\emph{If all premises of a rule occur on $\mathcal{B}$, then its one conclusion\footnote{Note that branching rules have \emph{two} conclusions.} occurs on~$\mathcal{B}$.}
\end{itemize}
\end{definition}
\begin{remark}
Note that due to proposition~\ref{prop:+isenough}, we need to check only one valuation of $\phi$ to verify its validity.
\end{remark}
\begin{convention}[Interpretation of constraints]\label{conv:TG2meaning}
The following table gives the interpretations of structural constraints on the example of $\leqslant$.
\begin{center}
\begin{tabular}{c|c}
\textbf{entry}&\textbf{interpretation}\\\hline
$w\!:1\!:\!\phi\leqslant w'\!:2\!:\!\phi'$&$v_1(\phi,w)\leq v_2(\phi',w')$\\
$w\!:\!2\!:\!\phi\leqslant c$&$v_2(\phi,w)\leq c$ with $c\in\{0,1\}$
\end{tabular}
\end{center}
\end{convention}
As one can see from fig.~\ref{fig:G2propositionalrules} and fig.~\ref{fig:modalrules}, the rules follow the semantical conditions from definition~\ref{def:KG2semantics}. Let us discuss $\rightarrow_1\!\leqslant$ and $\Box_1\!\lesssim$ in more details.

The premise of $\rightarrow_1\!\leqslant$ is interpreted as $v_1(\phi\rightarrow\phi',w)\leqslant x$. To decompose the implication, we check two options: either $x=1$ (then, the value of $\phi\rightarrow\phi'$ is arbitrary) or $x<1$. In the second case, we use the semantics to obtain that $v_1(\phi',w)\leqslant x$ and $v_1(\phi,w)>v_1(\phi',w)$. 

In order to apply $\Box_1\!\lesssim$ to $w\!:\!1\!:\!\Box\phi\lesssim\mathfrak{X}$, we introduce a new state $w''$ that is seen by $w$. Since we work in a finite branching model, $w''$ can witness the value of $\Box\phi$. Thus, we add $w''\!:\!1\!:\!\phi\lesssim\mathfrak{X}$.

We also provide an example of how our tableaux work. On fig.~\ref{fig:tableauxproofs}, one can see a successful proof on the left and a failed proof on the right.
\begin{figure}[h!]
\footnotesize{
\begin{forest}
smullyan tableaux
[w_0\!:\!1\!:\!\mathbf{1}\!\coimplies\!\lozenge((p\!\coimplies\!q)\!\wedge\!q)\!<\!1
[w_0\!:\!1\!:\!\mathbf{1}\leqslant w_0\!:\!1\!:\!\lozenge((p\!\coimplies\!q)\!\wedge\!q)
[w_0\!:\!1\!:\!\lozenge((p\!\coimplies\!q)\!\wedge\!q)\!\geqslant\!1
[w_0Rw_1[w_1\!:\!1\!:\!(p\!\coimplies\!q)\!\wedge\!q\!\geqslant\!1[w_1\!:\!1\!:\!p\coimplies q\geqslant1[1\leqslant0,closed][w_1\!:\!1\!:\!q\geqslant1[w_1\!:\!1\!:\!p\geqslant1[w_1\!:\!1\!:\!q<1,closed]]]]]]]]
[w_0\!:\!1\!:\!\mathbf{1}<1,closed]]
\end{forest}
}
\quad\quad\quad
\footnotesize{
\begin{forest}
smullyan tableaux
[w_0\!:\!1\!:\!\Box p\rightarrow\Box\Box p<1[w_0\!:\!1\!:\!\Box\Box p\!<\!1[w_0\!:\!1\!:\!\Box p>w_0\!:\!1\!:\!\Box\Box p[w_0Rw_1[w_0\!:\!1\!:\!\Box p>w_1\!:\!1\!:\!\Box p[w_1\!:\!1\!:\!p>w_1\!:\!1\!:\!\Box p[w_1Rw_2[w_1\!:\!1\!:\!p>w_2\!:\!1\!:\!p[\frownie[[[[]]]]]]]]]]]]]
\end{forest}
}
\caption{$\times$ indicates closed branches; $\frownie$ indicates complete open branches.}
\label{fig:tableauxproofs}
\end{figure}
\begin{definition}[Branch realisation]\label{G2branchsatisfaction}
We say that a model $\mathfrak{M}=\langle W,R,v_1,v_2\rangle$ with $W=\{w:w\text{ occurs on }\mathcal{B}\}$ and $R=\{\langle w,w'\rangle:w\mathsf{R}w'\in\mathcal{B}\}$ \emph{realises a~branch $\mathcal{B}$} of a tree iff the following conditions are met.
\begin{itemize}[noitemsep,topsep=2pt]
\item $v_\mathbf{x}(\phi,w)\leq v_{\mathbf{x}'}(\phi',w')$ for any $w:\mathbf{x}:\phi\leqslant w':\mathbf{x}':\phi'\in\mathcal{B}$ with $\mathbf{x},\mathbf{x}'\in\{1,2\}$.
\item $v_\mathbf{x}(\phi,w)\leq c$ for any $w:\mathbf{x}:\phi\leqslant{c}\in\mathcal{B}$ with ${c}\in\{0,1\}$.
\end{itemize}
\end{definition}
\begin{theorem}[Completeness]\label{theorem:Gconstraintcompleteness}
$\phi$ is $\fbKGsquare$ valid  iff it has a $\mathcal{T}(\fbKGsquare)$ proof.
\end{theorem}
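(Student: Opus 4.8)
The plan is to prove the two directions separately, following the standard pattern for constraint tableaux calculi. For \textbf{soundness} (the ``if'' direction), I would argue by contraposition: assuming $\phi$ is not $\fbKGsquare$ valid, I produce a model $\mathfrak{M}$ and a world $w_0$ with $v_1(\phi,w_0)<1$ (using proposition~\ref{prop:+isenough}, it suffices to refute the first valuation), hence the initial constraint $w_0\!:\!1\!:\!\phi<1$ is realised by $\mathfrak{M}$. The core lemma is that each tableau rule \emph{preserves realisability}: if a branch $\mathcal{B}$ is realised by some model and a rule is applied to $\mathcal{B}$, then at least one of the resulting branches is still realised (for the branching rules, by a case split dictated by the semantics; for the modal rules that introduce a fresh $w''$, by choosing $w''$ to name a witness accessible world, which exists precisely because frames are finitely branching so infima/suprema are attained). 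A realised branch cannot be closed, since a realised branch cannot contain $\mathfrak{X}<\mathfrak{X}$, nor $0\geqslant 1$, nor $\mathfrak{X}>1$, nor $\mathfrak{X}<0$ (these contradict the interpretation of constraints in convention~\ref{conv:TG2meaning}). Hence no closed tableau can start from $w_0\!:\!1\!:\!\phi<1$, so $\phi$ has no proof.

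For \textbf{completeness} (the ``only if'' direction), again by contraposition: suppose $\phi$ has no $\mathcal{T}(\fbKGsquare)$ proof, i.e.\ no closed tableau starts from $w_0\!:\!1\!:\!\phi<1$. Running a fair proof-search strategy produces a tableau with a complete open branch $\mathcal{B}$ (fairness: every applicable rule is eventually applied; termination of the search, or at least the existence of such a branch in the limit, needs the usual argument that the modal rules do not loop indefinitely because the modal depth of labelled formulas strictly decreases). From $\mathcal{B}$ I build a model realising it. The carrier is the set of state-labels on $\mathcal{B}$ and $R$ is read off the relational constraints; the valuations $v_1,v_2$ on variables are extracted by solving the system of structural constraints over $[0,1]$ — since the constraints are a finite set of (strict and non-strict) order relations among finitely many ``atoms'' $w\!:\!\mathbf{x}\!:\!p$ and the constants $0,1$, and $\mathcal{B}$ is open (its transitive closure under $\lesssim$ contains no $\mathfrak{X}<\mathfrak{X}$ and no clash with $0,1$), this partial order embeds order-preservingly into $[0,1]$, which fixes the values of variables. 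Then one shows by induction on formula complexity that the chosen valuation satisfies \emph{all} structural constraints on $\mathcal{B}$, using completeness of the branch: whenever a complex labelled formula appears on the left of a constraint, the relevant decomposition rule has been applied, so the inductive hypothesis on the immediate subformulas plus the G\"odel/bi-G\"odel algebra clauses of definition~\ref{def:KG2semantics} give the desired (in)equality. In particular, since $w_0\!:\!1\!:\!\phi<1 \in \mathcal{B}$, we get $v_1(\phi,w_0)<1$, so $\mathfrak{F}\not\models_{\fbKGsquare}\phi$.

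The step I expect to be the main obstacle is the inductive ``truth lemma'' in the completeness direction, specifically handling the \emph{modal} rules together with the fact that all four of $\Box_1,\Box_2,\lozenge_1,\lozenge_2$ with both $\lesssim$ and $\gtrsim$ must cooperate. For, say, $w\!:\!1\!:\!\Box\phi\lesssim\mathfrak{X}$, completeness gives a witness $w''$ with $w''\!:\!1\!:\!\phi\lesssim\mathfrak{X}$, but to conclude $v_1(\Box\phi,w)=\inf\{v_1(\phi,w'):wRw'\}$ equals the right value one also needs the ``$\gtrsim$'' rule $\Box_1\!\gtrsim$ to have been applied to all relevant lower bounds, so that no accessible world pushes the infimum below what the constraints record; the interplay of the two sorts of inequality, the strict/non-strict distinction, and the side condition in the footnote about which constraint the rules act on, is where the bookkeeping is delicate. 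A secondary subtlety is justifying that the finite partial order of constraints genuinely embeds into $[0,1]$ respecting strict versus non-strict edges — this is where openness of $\mathcal{B}$ is used in full (no cycle through a strict edge), and where one must be careful that chains can always be realised because $[0,1]$ is dense; this is exactly the point where the argument mirrors the propositional case of~\cite{BilkovaFrittellaKozhemiachenko2021}, so I would cite that and focus the new work on the modal clauses.
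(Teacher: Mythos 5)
Your proposal is correct and follows essentially the same route as the paper: soundness by showing each rule preserves realisability of at least one conclusion (with finite branching supplying the modal witnesses) and that closed branches are unrealisable, and completeness by extracting a model from a complete open branch, embedding the finite strict/non-strict constraint order on atoms into $[0,1]$ (the paper does this explicitly by counting $\prec$-predecessors of equivalence classes, yielding values in $\{0,\tfrac{1}{2n|W|},\ldots,1\}$), and then an induction showing satisfaction of conclusions implies satisfaction of the premise. The one obstacle you flag — that $\Box_1\!\lesssim$ and $\Box_1\!\gtrsim$ must ``cooperate'' to pin down the exact value of $\Box\phi$ — is actually a non-issue: the truth lemma only needs each constraint on the branch to be verified separately, and since $R(w)$ in the constructed model is exactly the finite set recorded by the relational constraints, each upper-bound constraint is witnessed by the fresh state and each lower-bound constraint holds because the $\gtrsim$-rule was applied to every accessible state.
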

\begin{proof}
We consider only the $\fbKGsquare$ case since $\fbKbiG$ can be handled the same way. For soundness, we check that if the premise of the rule is realised, then so is at least one of its conclusions. We consider the cases of $\rightarrow_1\!\leqslant$ and $\Box_1\!\lesssim$. Assume that $w\!:\!1\!:\phi\rightarrow\phi'\!\leqslant\!\mathfrak{X}$ is realised and assume w.l.o.g.\ that $\mathfrak{X}=u\!:\!2\!:\!\psi$. It is clear that either $v_2(\psi,u)=1$ or $v_2(\psi,u)<1$. In the first case, $\mathfrak{X}\geqslant1$ is realised. In the second case, we have that $v_1(\phi,w)>v_1(\phi',w)$ and $v_1(\phi',w)\leqslant v_2(\psi,u)$. Thus, $\mathfrak{X}<1$, $w\!:\!1\!:\!\phi>w\!:\!1\!:\!\phi'$, and $w\!:\!1\!:\!\phi'\leqslant u\!:\!1\!:\!\psi$ are realised as well, as required.

For $\Box_1\!\lesssim$, assume that $w\!:\!1\!:\Box\phi\!\leqslant\!\mathfrak{X}$ is realised and assume w.l.o.g.\ that $\mathfrak{X}=u\!:\!2\!:\!\psi$. Thus, $v_1(\Box\phi,w)\leqslant v_2(\psi,u)$ Then, since the model is finitely branching, there is an accessible state $w''$ s.t.\ $v_1(\phi,w)\leqslant v_2(\psi,u)$. Thus, $w''\!:\!1\!:\phi\!\leqslant\!\mathfrak{X}$ is realised too.

As no closed branch is realisable, the result follows.

For completeness, we show that every complete open branch $\mathcal{B}$ is realisable. We construct the model as follows. We let $W=\{w:w\text{ occurs in }\mathcal{B}\}$, and set $R=\{\langle w,w'\rangle:w\mathsf{R}w'\in\mathcal{B}\}$. Now, it remains to construct the suitable valuations.

For $i\in\{1,2\}$, if $w\!:\!i\!:\!p\geqslant1\in\mathcal{B}$, we set $v_i(p,w)=1$. If $w\!:\!i\!:\!p\leqslant0\in\mathcal{B}$, we set $v_i(p,w)=0$. To set the values of the remaining variables $q_1$, \ldots, $q_n$, we proceed as follows. Denote $\mathcal{B}^+$ the transitive closure of $\mathcal{B}$ under $\lesssim$ and let
\[[w\!:\!\mathbf{x}\!:\!q_i]\!=\!\left\{w'\!:\!\mathbf{x}'\!:\!q_j \; \left| \; \begin{matrix}w\!:\!\mathbf{x}\!:\!q_i\leqslant w'\!:\!\mathbf{x}'\!:\!q_j\in\mathcal{B}^+\text{ and }w\!:\!\mathbf{x}\!:\!q_i\!<\!w'\!:\!\mathbf{x}'\!:\!q_j
\notin\mathcal{B}^+\\
\text{or}\\
w\!:\!\mathbf{x}\!:\!q_i\geqslant w'\!:\!\mathbf{x}'\!:\!q_j\in\mathcal{B}^+\text{ and }w\!:\!\mathbf{x}\!:\!q_i\!>\!w'\!:\!\mathbf{x}'\!:\!q_j\notin\mathcal{B}^+
\end{matrix}\right.\right\}\]
It is clear that there are at most $2\cdot n\cdot|W|$ $[w\!:\!\mathbf{x}\!:\!q_i]$'s  since the only possible loop in $\mathcal{B}^+$ is $w_{i_1}\!:\!\mathbf{x}\!:\!r\leqslant\ldots\leqslant w_{i_1}\!:\!\mathbf{x}\!:\!r$, but in such a loop all elements belong to $[w_{i_1}\!:\!\mathbf{x}\!:\!r]$. We put $[w\!:\!\mathbf{x}\!:\!q_i]\prec[w'\!:\!\mathbf{x}'\!:\!q_j]$ iff there are $w_k\!:\!\mathbf{x}\!:\!r\in[w\!:\!\mathbf{x}\!:\!q_i]$ and $w'_k\!:\!\mathbf{x}'\!:\!r'\in[w'\!:\!\mathbf{x}'\!:\!q_j]$ s.t. $w_k\!:\!\mathbf{x}\!:\!r<w'_k\!:\!\mathbf{x}'\!:\!r'\in\mathcal{B}^+$.

We now set the valuation of these variables as follows
\begin{align*}
v_\mathbf{x}(q_i,w)=\dfrac{|\{[w'\!:\!\mathbf{x}'\!:\!q']\mid[w'\!:\!\mathbf{x}'\!:\!q']\prec[w\!:\!\mathbf{x}\!:\!q_i]\}|}{2\cdot n\cdot|W|}
\end{align*}
Note that if some $\phi$ contains $s$ but $\mathcal{B}^+$ contains no inequality with it, the above definition ensures that $s$ is going to be evaluated at $0$. Thus, all constraints containing only variables are satisfied.

It remains to show that all other constraints are satisfied. For that, we prove that if at least one conclusion of the rule is satisfied, then so is the premise. The propositional cases are straightforward and can be tackled in the same manner as in~\cite[Theorem~2]{BilkovaFrittellaKozhemiachenko2021}. We consider only the case of $\lozenge_2\!\gtrsim$. Assume w.l.o.g. that $\gtrsim=\geqslant$ and $\mathfrak{X}=u\!:\!1\!:\!\psi$. Since $\mathcal{B}$ is complete, if $w\!:2\!:\!\lozenge\phi\geqslant u\!:\!1\!:\!\psi\in\mathcal{B}$, then for any $w'$ s.t. $w\mathsf{R}w'\in\mathcal{B}$, we have $w'\!:2\!:\!\phi\geqslant u\!:\!1\!:\!\psi\in\mathcal{B}$, and all of them are realised by $\mathfrak{M}$. But then $w\!:2\!:\!\lozenge\phi\geqslant u\!:\!1\!:\!\psi$ is realised too, as required.
\end{proof}

\begin{theorem}\label{theorem:complexity}
\begin{enumerate}
\item[]
\item Let $\phi\in\bimodalLsquare$ be \emph{not $\fbKGsquare$ valid}, and let $|\phi|$ denote the number of symbols in it. Then there is a model $\mathfrak{M}$ of the size $O(|\phi|^{|\phi|})$ and depth $O(|\phi|)$ and $w\in\mathfrak{M}$ s.t. $v_1(\phi,w)\neq1$.
\item $\fbKGsquare$ validity and satisfiability\footnote{Satisfiability and falsifiability (non-validity) are reducible to each other using $\coimplies$: $\phi$ is satisfiable iff ${\sim\sim}(\phi\coimplies\mathbf{0})$ is falsifiable; $\phi$ is falsifiable iff ${\sim\sim}(\mathbf{1}\coimplies\phi)$ is satisfiable.
} are $\mathsf{PSPACE}$-complete.
\end{enumerate}
\end{theorem}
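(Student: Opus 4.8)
The plan is to prove the two parts in tandem: the model-size bound in (1) essentially already contains the $\mathsf{PSPACE}$ upper bound of (2), while $\mathsf{PSPACE}$-hardness in (2) follows by embedding classical modal logic $\mathbf{K}$. Let me outline each.

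\textbf{Part (1): bounded countermodel.} Suppose $\phi$ is not $\fbKGsquare$ valid. By Proposition~\ref{prop:+isenough} it suffices to find a model and a point where $v_1(\phi,w)\neq 1$, equivalently (by Theorem~\ref{theorem:Gconstraintcompleteness}) a complete open branch of a tableau started from $w_0\!:\!1\!:\!\phi<1$. The key structural observation is that the modal rules in fig.~\ref{fig:modalrules} only ever create a fresh successor when decomposing a modal \emph{structure} $w\!:\!\mathbf{x}\!:\!\heartsuit\psi\lesssim\mathfrak{X}$ (the ``existential'' side: $\Box_1\!\lesssim$, $\Box_2\!\gtrsim$, $\lozenge_1\!\gtrsim$, $\lozenge_2\!\lesssim$), and each such structure triggers \emph{exactly one} fresh successor; the ``universal'' rules only propagate to already-existing successors. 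One first argues that the tableau can be developed in a way that respects a strict \emph{modal-depth} ordering: applying a modal rule at a state $w$ only produces structures about subformulas of $\phi$ at the successor, whose modal depth strictly decreases. Hence the relational part of any branch is a tree of depth at most $\mathsf{md}(\phi)=O(|\phi|)$. Moreover, at each state only structures built from subformulas of $\phi$ (and the constants $0,1$) and their negated/permuted variants appear, so there are $O(|\phi|)$ distinct modal structures that can each spawn one successor; thus each state has at most $O(|\phi|)$ children, giving a tree of size $O(|\phi|^{|\phi|})$. The valuation is then read off the branch exactly as in the completeness proof of Theorem~\ref{theorem:Gconstraintcompleteness} (quotient by $\mathcal{B}^+$-equivalence classes of variable-structures, linearly ordered, assigning rationals with denominator $2\cdot n\cdot|W|$), which realises the branch and in particular gives $v_1(\phi,w_0)<1$.

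\textbf{Part (2): $\mathsf{PSPACE}$.} For the upper bound, one turns the bounded-branch search into an alternating / depth-first procedure. Rather than materialising the whole $O(|\phi|^{|\phi|})$-size tableau, one explores it state by state: at each state $w$ one keeps only the (polynomially many) structures local to $w$, applies propositional rules exhaustively — this is a $\mathsf{PSPACE}$ (indeed, by the $\Gsquare$ analysis in~\cite{BilkovaFrittellaKozhemiachenko2021}, manageable) subprocedure that may branch but whose local data is polynomial — and for each existential modal structure recursively generates and checks the single required successor, reusing the same polynomial workspace after returning. Since the recursion depth is the modal depth $O(|\phi|)$ and each level stores only polynomially much information, the whole search runs in polynomial space; closure of a branch (a cycle $\mathfrak{X}<\mathfrak{X}$ in the transitive closure, or $0\geqslant 1$ etc.) is checkable within the local/stacked constraint set. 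By Savitch/closure of $\mathsf{PSPACE}$ under complementation and the satisfiability/falsifiability interdefinability noted in the footnote, both validity and satisfiability are in $\mathsf{PSPACE}$. For $\mathsf{PSPACE}$-hardness: by Theorem~\ref{theorem:definability} and the faithful embedding of $\mathbf{K}$ into $\KG^c$ (hence into $\fbKbiG$, hence into $\fbKGsquare$) via $p\mapsto{\sim\sim}p$ — here one must also check the embedding is sound and faithful over \emph{finitely branching} frames, which holds because $\mathbf{K}$ is complete w.r.t.\ finite (hence finitely branching) frames — $\mathbf{K}$-validity reduces in polynomial time to $\fbKGsquare$-validity, and $\mathbf{K}$-validity is $\mathsf{PSPACE}$-hard by the classical result of Ladner.

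\textbf{Main obstacle.} The routine parts are the $\mathbf{K}$-embedding and the tree-depth bound. The delicate point is the $\mathsf{PSPACE}$ \emph{upper} bound: one must be careful that the constraint-closure bookkeeping (the transitive closure under $\lesssim$ used both for detecting closed branches and for building the valuation) does not require remembering the whole generated tree. The resolution is that closure detection at a state only needs the constraints on the current root-to-node path plus the local structures — inequalities never ``reach back up'' past a modal step in a way that couples sibling subtrees — so a depth-first traversal carrying only the current branch's constraint set suffices; making this locality precise, and verifying that the propositional saturation step itself stays in $\mathsf{PSPACE}$, is where the real work lies.
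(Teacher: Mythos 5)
Your treatment of part (1) and of $\mathsf{PSPACE}$-hardness matches the paper's: the countermodel is read off a complete open tableau branch, its depth is bounded by the modal nesting depth and its branching by the number of modal subformulas per level, and hardness comes from the faithful embedding $p\mapsto{\sim\sim}p$ of $\mathbf{K}$ together with completeness of $\mathbf{K}$ w.r.t.\ finitely branching frames. The problem is the $\mathsf{PSPACE}$ \emph{upper} bound, where you correctly isolate the difficulty --- that the transitive closure of the structural constraints under $\lesssim$ seems to require remembering the whole tree --- but your proposed resolution is wrong, and you admit you have not carried it out. The locality claim that ``inequalities never reach back up past a modal step in a way that couples sibling subtrees'' is false for the calculus as given. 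For example, decomposing $w_0\!:\!1\!:\!\Box p\rightarrow\Box q<1$ yields $w_0\!:\!1\!:\!\Box p>w_0\!:\!1\!:\!\Box q$; applying $\Box_1\!\lesssim$ to the right-hand side creates a fresh $w_1$ with $w_1\!:\!1\!:\!q<w_0\!:\!1\!:\!\Box p$, and then $\Box_1\!\gtrsim$ propagates $w'\!:\!1\!:\!p>w_1\!:\!1\!:\!q$ to \emph{every} successor $w'$ of $w_0$, including successors created later in other subtrees. Whether such a constraint participates in a $<$-cycle depends on constraints living inside the (by then discarded) $w_1$-subtree, so a depth-first traversal that keeps only the current root-to-node path cannot soundly detect closure.

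The paper resolves this by a genuinely different device that your proposal is missing: it first extracts from the completeness proof that a satisfying/falsifying valuation can always be chosen with values in the finite set $\mathsf{V}=\{0,\tfrac{1}{2n|W|},\dots,1\}$ (each value representable in polynomially many bits even though $|W|$ may be exponential), and then \emph{replaces the order-constraint calculus by a labelled calculus} whose entries are equalities $w\!:\!i\!:\!\psi=\mathsf{v}$ with $\mathsf{v}\in\mathsf{V}$. In that reformulation all cross-state comparisons disappear: a branch closes only on a purely local clash $w\!:\!i\!:\!\psi=\mathsf{v}$, $w\!:\!i\!:\!\psi=\mathsf{v}'$ with $\mathsf{v}\neq\mathsf{v}'$, and the standard depth-first, delete-after-verification exploration then runs in space $O(|\phi|^2)$. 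Without this discretisation step (or some substitute argument controlling the global constraint graph), your upper-bound argument does not go through.
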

\begin{proof}
We begin with 1. By theorem~\ref{theorem:Gconstraintcompleteness}, if $\phi$ is \emph{not $\fbKGsquare$ valid}, we can build a~falsifying model using tableaux. It is also clear from the rules on fig.~\ref{fig:modalrules} that the depth of the constructed model is bounded from above by the maximal number of nested modalities in $\phi$. The width of the model is bounded by the maximal number of modalities on the same level of nesting. The sharpness of the bound is obtained using the embedding of $\mathbf{K}$ into $\fbKGsquare$ since $\mathbf{K}$ is complete w.r.t.\ finitely branching models and it is possible to force shallow trees of exponential size in $\mathbf{K}$ (cf., e.g.~\cite[\S6.7]{BlackburndeRijkeVenema2010}). The embedding also entails $\mathsf{PSPACE}$-hardness. It remains to tackle membership.

First, observe from the proof of theorem~\ref{theorem:Gconstraintcompleteness} that $\phi(p_1,\ldots,p_n)$ is satisfiable (falsifiable) on $\mathfrak{M}=\langle W,R,v_1,v_2\rangle$ iff there are $v_1$ and $v_2$ that give variables values from $\mathsf{V}=\left\{0,\frac{1}{2\cdot n\cdot|W|},\ldots,\frac{2\cdot n\cdot|W|-1}{2\cdot n\cdot|W|},1\right\}$ under which $\phi$ is satisfied (falsified).

As we mentioned, $|W|$ is bounded from above by $k^{k+1}$ with $k$ being the number of modalities in $\phi$. Therefore, we replace structural constraints with labelled formulas of the form $w\!:\!i\!:\!\phi\!=\!\mathsf{v}$ ($\mathsf{v}\in\mathsf{V}$) avoiding comparisons of values of formulas in different states. As expected, we close the branch if it contains $w\!:\!i\!:\!\psi\!=\!\mathsf{v}$ and $w\!:\!i\!:\!\psi\!=\!\mathsf{v}'$ for $\mathsf{v}\neq\mathsf{v}'$.

Now we replace the rules with the new ones that work with labelled formulas instead of structural constraints. Below, we give as an example new rules for $\rightarrow$ and $\lozenge$\footnote{Intuitively, for a value $1>\mathsf{v}>0$ of $\lozenge\phi$ at $w$, we add a new state that witnesses $\mathsf{v}$, and for a~state on the branch, we guess a~value smaller than $\mathsf{v}$. Other modal rules can be rewritten similarly.} (with $|\mathsf{V}|=m+1$):
\[\dfrac{w\!:\!1\!:\!\phi\rightarrow\phi'\!=\!1}{w\!:\!1\!:\!\phi=0\left|\begin{matrix}w\!:\!1\!:\!\phi\!=\!\frac{1}{m+1}\\w\!:\!1\!:\!\phi'\!=\!\frac{1}{m+1}\end{matrix}\right.\left|\begin{matrix}w\!:\!1\!:\!\phi\!=\!\frac{1}{m+1}\\w\!:\!1\!:\!\phi'\!=\!\frac{2}{m+1}\end{matrix}\right|\ldots\left|\begin{matrix}w\!:\!1\!:\!\phi\!=\!\frac{m-1}{m+1}\\w\!:\!1\!:\!\phi'\!=\!\frac{m}{m+1}\end{matrix}\right|w\!:\!1\!:\!\phi'\!=\!1}\]
\begin{align*}
\dfrac{w\!:\!1\!:\!\lozenge\phi\!=\!\frac{r}{m+1}}{w\mathsf{R}w'';w''\!:\!1\!:\!\phi\!=\!\frac{r}{m+1}}
&&
\dfrac{w\!:\!1\!:\!\lozenge\phi\!=\!\frac{r}{m+1};w\mathsf{R}w'}{w'\!:\!1\!:\!\phi\!=\!0\mid\ldots\mid w'\!:\!1\!:\!\phi\!=\!\frac{r-1}{m+1}}
\end{align*}

We now show how to build a satisfying model for $\phi$ using polynomial space. We begin with $w_0\!:\!1\!:\phi\!=\!1$ and start applying propositional rules (first, those that do not require branching). If we implement a branching rule, we pick one branch and work only with it: either until the branch is closed, in which case we pick another one; until no more rules are applicable (then, the model is constructed); or until we need to apply a modal rule to proceed. At this stage, we need to store only the subformulas of $\phi$ with labels denoting their value at~$w_0$.

Now we guess a~modal formula (say, $w_0\!:\!2\!:\!\Box\chi\!=\!\frac{1}{m+1}$) whose decomposition requires an introduction of a~new state ($w_1$) and apply this rule. Then we apply all modal rules that use $w_0\mathsf{R}w_1$ as a premise (again, if those require branching, we guess only one branch) and start from the beginning with the propositional rules. If we reach a contradiction, the branch is closed. Again, the only new entries to store are subformulas of $\phi$ (now, with fewer modalities), their values at $w_1$, and a~relational term $w_0\mathsf{R}w_1$. Since the depth of the model is $O(|\phi|)$ and since we work with modal formulas one by one
, we need to store subformulas of $\phi$ with their values $O(|\phi|)$ times, so, we need only $O(|\phi|^2)$ space.

Finally, if no rule is applicable and there is no contradiction, we mark $w_0\!:\!2\!:\!\Box\chi\!=\!\frac{1}{m+1}$ as ‘safe’. Now we \emph{delete all entries of the tableau below it} and pick another unmarked modal formula that requires an introduction of a new state. Dealing with these one by one allows us to construct the model branch by branch. But since the length of each branch of the model is bounded by $O(|\phi|)$ and since we delete \emph{branches of the model} once they are shown to contain no contradictions, we need only polynomial space.
\end{proof}

We end the section with two simple observations. First, theorems~\ref{theorem:Gconstraintcompleteness} and~\ref{theorem:complexity} are applicable both to $\fbKbiG$ and $\fbKGsquare$ because the latter is conservative over the former. Secondly, since $\KGsquare$ and $\KbiG$ are conservative over $\KG^c$ and since $\mathbf{K}$ can be embedded in $\KG^c$, the lower bounds on complexity of a classical modal logic of some class of frames $\mathbb{K}$ and $\Gsquare$ modal logic of $\mathbb{K}$ will coincide.
\section{Concluding remarks}\label{sec:conclusion}
In this paper, we developed a crisp modal expansion of the two-dimensional G\"{o}del logic $\Gsquare$ as well as an expansion of bi-G\"{o}del logic with $\Box$ and $\lozenge$ both for crisp and fuzzy frames. We also established their connections with modal G\"{o}del logics, and gave a complexity analysis of their finitely branching fragments.

The following steps are: to study the proof theory of $\KGsquare$ and $\fbKGsquare$: both in the form of Hilbert-style and sequent calculi; establish the decidability (or lack thereof) for the case of $\KGsquare$. Moreover, two-dimensional treatment of information invites for different modalities, e.g. those formalising aggregation strategies given in~\cite{BilkovaFrittellaMajerNazari2020} --- in particular, the cautious one (where the agent takes minima / infima of \emph{both} positive and negative supports of a given statement) and the confident one (whereby the maxima / suprema are taken). Last but not least, while in this paper we assumed that our \emph{access} to sources is crisp, one can argue that the \emph{degree of our bias} towards the given source can be formalised via \emph{fuzzy} frames. Thus, it would be instructive to construct a fuzzy version of $\KGsquare$.

In a broader perspective, we plan to provide a general treatment of two-dimensional modal logics of uncertainty. Indeed, within our project~\cite{BilkovaFrittellaMajerNazari2020,BilkovaFrittellaKozhemiachenko2021}, we are formalising reasoning with heterogeneous and possibly incomplete and inconsistent information (such as crisp or fuzzy data, personal beliefs, etc.) in a modular fashion. This modularity is required because different contexts should be treated with different logics --- indeed, not only the information itself can be of various nature but the reasoning strategies of different agents even applied to the same data are not necessarily the same either. Thus, since we wish to account for this diversity, we should be able to combine different logics in our approach.
\bibliographystyle{splncs04}

\end{document}